\newtheorem{thm}{Theorem}[section]
\newtheorem{prop}[thm]{Proposition}
\newtheorem{lem}[thm]{Lemma}
\newtheorem{cor}[thm]{Corollary}
\theoremstyle{definition}
\newtheorem{defi}[thm]{Definition}
\theoremstyle{remark}
\numberwithin{equation}{section}
\newcommand{\R}{\mathbb{R}^n}
\begin{document}

\title{A Dirichlet problem for nonlocal degenerate elliptic operators with internal nonlinearity} 
\author{Hui Yu}
\address{Department of Mathematics, the University of Texas at Austin}
\email{hyu@math.utexas.edu}

\begin{abstract}
We study a Dirichlet problem in the entire space for some nonlocal degenerate elliptic operators with internal nonlinearities. With very mild assumptions on the boundary datum, we prove existence and uniqueness of the solution in the viscosity sense. If we further assume uniform ellipticity then the solution is shown to be classical, and even smooth if both the operator and the boundary datum are smooth.
\end{abstract}

 \maketitle

\tableofcontents

\section{Introduction}
Nonlocal elliptic operators model diffusion processes with long-term interactions. Since Caffarelli and Silvestre introduced the notion of viscosity solutions for these operators \cite{CS1}, one  is able to  deal with very general classes of nonlocal fully nonlinear elliptic operators, and a theory analogue to the  case of second order elliptic equations is established. See for instance the works of Caffarelli-Silvestre \cite{CS1}\cite{CS2}\cite{CS3}, Kriventsov \cite{Kri}, Jin-Xiong \cite{JX}, Serra \cite {Ser} and Yu \cite{Y1}\cite{Y2}.

In these works a nonlocal elliptic operator is of the form \begin{equation}
I[u,x]=\inf_{\alpha}\sup_{\beta} L_{\alpha\beta}u(x),
\end{equation} where each $L_{\alpha\beta}$ is a linear operator of the form \begin{equation*}
L_{\alpha\beta}u(x)=c_{n,\sigma}\int \delta u(x,y)K_{\alpha\beta}(x,y)dy.\end{equation*} Here $c_{n,\sigma}$ is a constant depending on the dimension of the space $n$, as well as the order of the operator $\sigma$, which is always assumed to be in $(1,2)$ in this work.  $\delta u(x,y)=u(x+y)+u(x-y)-2u(x)$ is the symmetric difference centered at the point $x$. $K_{\alpha\beta}(x,y)$ are some kernels comparable to $\frac{1}{|y|^{n+\sigma}}$, which is the kernel for the classical fractional Laplacian $\Delta^{\sigma/2}$. 

In a sense, these kernels assign weights to information coming from different locations and directions in the media. Taking the place of coefficient matrices in second order equations, they encode the `inhomogeneity'  and `anisotropy' of the underlying media. 

Since an integral kernel enjoys more `degrees of freedom' than a matrix, the theory of nonlocal operators allows much richer `spatial inhomogeneity and anisotropy' in the media. However, for operators as in (1.1), the dependence on $\delta u(x,y)$ is still trivial. To cover possibly different dependence on $\delta u(x,y)$, we propose to study operators of the following form \begin{equation}
I[u,x]=\int \frac{F(\delta u(x,y))}{|y|^{n+\sigma}}dy,
\end{equation}where $F$ is an increasing function with $F(0)=0$. 

Here the underlying medium is homogeneous and isotropic as the kernel is simply the kernel for fractional Laplacian. But the dependence on $\delta u$ can take various forms. For instance, $F(t)=10^5 t\chi_{|t|<0.01}+t\chi_{0.01<|t|<100}+10^{-5}t\chi_{|t|>100}$ models a diffusion process where one sees strong diffusive effect at `near equilibrium' points but very weak diffusive effect at `far from equilibrium' points. For another example, $F(t)=10^5 t\chi_{t>0}+10^{-5}t\chi_{t<0}$ models a process where the diffusion is strong at `convex' points but is weak at `concave' points.

It is interesting to note that in the limit as $\sigma\to 2$, an operator of this form converges, at least formally, to a constant multiple of the Laplacian, the constant being $F'(0)$. This may explain why operators of this form have not received much attention. However, in the case when $\sigma<2$, they do exhibit nontrivial behaviour.

In this paper we study the following Dirichlet problem for this type of operators with `internal nonlinearity'\footnote{This term was suggested by Dennis Kriventsov.} $F$:

$$\begin{cases}
\int \frac{F(\delta u(x,y))}{|y|^{n+\sigma}}dy=g(x,u-\phi)  &\text{     in $\R$}\\ u-\phi\to 0 &\text{   at $\infty$}.
\end{cases}$$ We impose the following conditions throughout the paper on the nonlinearity $F$, the forcing term $g$ and the boundary datum $\phi$:

\begin{itemize}\item{$F: \mathbb{R}\to\mathbb{R}$ is a $C^1$ function which satisfies
\begin{equation}
F(0)=0,  
\end{equation} 
\begin{equation}
Lip(F)<L_1,
\end{equation} 
and \begin{equation}
F'>0.
\end{equation}} 
\item{$g:\R\times \mathbb{R}\to \mathbb{R}$ is Lipschitz in the first variable
\begin{equation}
|g(x,t)-g(x',t)|<L_2|x-x'|,
\end{equation}  and uniformly increasing in the second variable \begin{equation}
g(x,t)-g(x.s)\ge\mu(t-s) \text{ if $t>s$ for some $\mu>0$,}
\end{equation} and \begin{equation}
g(x,0)=0 \text{ for all $x\in\R$}.
\end{equation} }
\item{$\phi:\R\to\mathbb{R}$ is a convex Lipchitz $C^{1,1}$-function with finite $\mathcal{L}^1(\frac{1}{(1+|y|)^{n+\sigma}}dy)$ norm\begin{equation}
0\le D^2(\phi)\le L_3, 
\end{equation} 
\begin{equation}
Lip(\phi)\le L_4,
\end{equation} and \begin{equation}
\int |\phi(y)|\frac{1}{(1+|y|)^{n+\sigma}}dy\le L_5.
\end{equation}}\end{itemize} 

Let's make a few remarks. 

We are solving the Dirichlet problem in the entire space $\R$, which seems a natural first step before moving to domains with more interesting geometry. Condition (1.5) makes this operator elliptic. However no lower bound on $F'$ is assumed, and thus allows the possibility of degeneracy. To account for this lack of uniform ellipticity, we impose (1.7) on the forcing term, which will be crucial for the comparison principle and the well-posedness of the problem. Conditions (1.8) and (1.10) make sure the operator can be computed classically at $\phi$. The convexity of $\phi$ says that it is a natural subsolution.

Under these assumptions we first prove a comparison principle between viscosity subsolutions and supersolutions: \begin{thm}
Let $u,v$ be two continuos functions satisfying the following in the viscosity sense
$$\begin{cases}
\int \frac{F(\delta u(x,y))}{|y|^{n+\sigma}}dy\ge g(x,u-\phi)  &\text{     in $\R$}\\ u-\phi\to 0 &\text{   at $\infty$},
\end{cases}$$
and $$\begin{cases}
\int \frac{F(\delta v(x,y))}{|y|^{n+\sigma}}dy\le g(x,v-\phi)  &\text{     in $\R$}\\ v-\phi\to 0 &\text{   at $\infty$}. 
\end{cases}$$ Then \begin{equation*}
u\le v \text{ in $\R$}.
\end{equation*} 
\end{thm} 

This comparison principle together with the translation-invariant character of the operator also yield the following regularity estimate, which do not depend on uniform ellipticity.

\begin{thm}
Let $u$ solve the Dirichlet problem in the viscosity sense. Then $u$ is $C^{\frac{2-\sigma}{2}}$ with estimate \begin{equation}
[u]_{C^{\frac{2-\sigma}{2}}}\le C(\|u-\phi\|_{\mathcal{\infty}}+1),
\end{equation} where the constant $C$ depends on the constants in (1.4)-(1.11).
\end{thm}

The comparison principle implies the uniqueness of the solution. And the above regularity provides enough compactness for an approximation argument that gives the existence result.

\begin{thm}Suppose $\phi$ is `close to a cone' at $\infty$. Then under either of these assumptions \begin{itemize}
\item{$F$ is concave}
\end{itemize} or \begin{itemize}\item{$g$ grows superlinearly in the second variable,}\end{itemize}
there exists a unique viscosity solution to the Dirichlet problem.\end{thm}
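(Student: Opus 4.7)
The plan is to prove uniqueness via Theorem 1.1, and to obtain existence by solving approximating Dirichlet problems on balls $B_R$ with exterior datum $\phi$ and sending $R\to\infty$. Theorem 1.2 will provide the compactness needed for passing to the limit, and the two alternative structural hypotheses will supply a uniform $L^\infty$ bound.

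First I would observe that $\phi$ is itself a viscosity subsolution: its convexity (1.8) gives $\delta\phi(x,y)\ge 0$ pointwise, so (1.3) and (1.5) yield $I[\phi]\ge 0 = g(x,0)$. The assumption that $\phi$ is close to a cone at infinity, together with (1.8)--(1.10), guarantees moreover that $I[\phi]$ is a bounded function on $\R$: the contribution near $y=0$ is controlled by $L_3$, and the tail is controlled by the linear growth of the underlying cone together with the integrability of $|y|^{1-n-\sigma}$ (which uses $\sigma>1$). This boundedness of $I[\phi]$ is the one consequence of the cone hypothesis that enters the a priori bounds.

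Solving the Dirichlet problem on $B_R$ with exterior data $\phi$ can be done after adding a regularizing term $\varepsilon\Delta^{\sigma/2}$ to invoke existing uniformly elliptic theory, and then sending $\varepsilon\to 0$ by the same compactness. Comparison places the resulting solution $u_R$ above $\phi$. To obtain a uniform upper bound on $u_R-\phi$, I would argue differently in the two cases. When $F$ is concave with $F(0)=0$, the tangent-line inequality $F(t)\le F'(0)t$ gives $I[u]\le F'(0)\Delta^{\sigma/2}u$, so any supersolution $\bar u$ of the uniformly elliptic problem $F'(0)\Delta^{\sigma/2}\bar u=g(x,\bar u-\phi)$ with $\bar u-\phi\to 0$ at infinity is a supersolution of our problem and bounds $u_R$ from above; such a $\bar u$ is furnished by the classical theory for uniformly elliptic nonlocal operators. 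When $g$ is superlinear in its second variable, I would argue at an interior maximum point $x_0$ of $u_R-\phi$: there $\delta u_R(x_0,y)\le \delta\phi(x_0,y)$ for every $y$, so monotonicity of $F$ and the equation give $g(x_0,u_R(x_0)-\phi(x_0))=I[u_R](x_0)\le I[\phi](x_0)\le C$, and superlinearity of $g$ converts this into a uniform bound on $\sup(u_R-\phi)$.

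Once the uniform $L^\infty$ bound is in hand, Theorem 1.2 yields uniform $C^{\frac{2-\sigma}{2}}$ estimates, so a diagonal subsequence of $\{u_R\}$ converges locally uniformly to some $u$ which solves the equation in $\R$ by standard viscosity stability. The main obstacle I anticipate is the verification of the boundary condition $u-\phi\to 0$ at infinity, and this is where the cone structure reenters: I would build explicit barriers at each point $x_0$ with $|x_0|$ large by perturbing the cone, using the translation invariance of $I$ to reduce to a comparable configuration at different scales. In the concave case such barriers can be inherited from the well-understood ones for the fractional Laplacian; in the superlinear case I expect the construction to be the most delicate step, since the required decay must be extracted from the equation itself rather than from a linear comparison.
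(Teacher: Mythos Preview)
Your approach differs from the paper's in two structural ways, and the second of these leaves a genuine gap.

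First, the paper does not approximate by exhausting domains $B_R\to\R$. Instead it regularizes the nonlinearity directly, setting $F_\epsilon(t)=\int_0^t\max\{\epsilon,F'(s)\}\,ds$, and solves the problem on all of $\R$ for each $\epsilon$ via Perron's method (which works once one is uniformly elliptic and has a sub/supersolution pair). The limit $\epsilon\to 0$ is then taken using the H\"older estimate of Theorem~1.2. Your domain approximation is not wrong in principle, but note that Theorem~1.2 as stated and proved applies to solutions of the global problem, not to solutions on $B_R$ with exterior data; you would need to rework that estimate in the interior of $B_R$, which is doable but is an extra step the paper avoids.

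Second, and more importantly, the paper never separates the $L^\infty$ bound from the decay at infinity. It constructs, under each of the two alternative hypotheses, an explicit supersolution of the form $\bar u=\phi+Mu_0$ with $u_0\to 0$ at infinity (in the superlinear case $u_0\sim |x|^{-p}$ with $p<\sigma-1$; in the concave case $u_0$ is a convolution with the fundamental solution of $-\Delta^{\sigma/2}$). The structural hypotheses are used precisely to make $\bar u$ a supersolution: concavity gives $F(t)\le F'(0)t$ and hence $I[\bar u]\le L_1\Delta^{\sigma/2}\phi$, while superlinearity lets one absorb the $O(M)$ term coming from $|\delta u_0|$ into $g(x,Mu_0)$ for bounded $x$. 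Since $\phi\le u_\epsilon\le\bar u$ and $\bar u-\phi\to 0$, the boundary condition for the limit is automatic.

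By contrast, your maximum-point argument in the superlinear case yields only $\sup(u_R-\phi)\le C$ (indeed, as you wrote it, the bound $g(x_0,M)\le I[\phi](x_0)\le C$ combined with $g(x_0,M)\ge\mu M$ from (1.7) gives this without even invoking superlinearity). But an $L^\infty$ bound alone does not force $u-\phi\to 0$ at infinity, and you explicitly leave the barrier construction for this decay as an unresolved ``most delicate step.'' That is the gap: the two alternative hypotheses are there exactly to manufacture a decaying supersolution, and once you have one the whole boundary-condition difficulty evaporates. Without it, your outline is incomplete.
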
 

The extra assumptions are mainly used to construct an appropriate supersolutions. The `close to a cone' property was used already in Caffarelli-Charro \cite{CCh}, its precise definition given in section 4.  

After showing the existence and uniqueness of solution to the possibly degenerate operator, we study the regularity of the solution. For this we impose the uniform ellipticity condition on $F$. This and some conditions on $g$ and $\phi$ guarantees the solution is classical \begin{thm}
If \begin{itemize}\item{$F$ is $C^2$ with $0<\lambda <F'<\Lambda$ and $|F''|\le L_6$,}\item{$g$ is locally Lipschitz in the second variable,} and \item{$Lip(D^2(\phi))<L_7$,}
\end{itemize} then the viscosity solution $u$ is in the class $C^{1+\sigma+\alpha}$ for some universal $0<\alpha<1$, and in particular is classical.
\end{thm}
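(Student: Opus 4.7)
The plan is a bootstrap from the $C^{(2-\sigma)/2}$ regularity of Theorem 1.2 up to $C^{1+\sigma+\alpha}$, exploiting the translation invariance of $I$ together with the uniform ellipticity of $F$. The first step is to linearise along translations: for any $h\in\R$ the increment $w_h(x):=u(x+h)-u(x)$ solves the linear equation
$$\int K_h(x,y)\,\frac{\delta w_h(x,y)}{|y|^{n+\sigma}}\,dy=G_h(x),$$
where $K_h(x,y):=\int_0^1 F'\bigl(t\,\delta u(x+h,y)+(1-t)\delta u(x,y)\bigr)\,dt\in[\lambda,\Lambda]$ by the uniform ellipticity of $F$, and $G_h$ is the corresponding increment of the right-hand side. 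Using (1.6), (1.9) and the local Lipschitz assumption on $g$ in its second argument, one has $\|G_h\|_\infty\le C(|h|+\|w_h\|_\infty)$.

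Because $\lambda\le K_h\le\Lambda$, the equation for $w_h$ falls in the scope of the interior $C^\alpha$ estimate of Caffarelli-Silvestre \cite{CS1} for linear nonlocal equations with measurable kernels. Combined with the a priori bound $\|w_h\|_\infty\le[u]_{C^{(2-\sigma)/2}}|h|^{(2-\sigma)/2}$ from Theorem 1.2, a dyadic rescaling in $|h|$ promotes the H\"older exponent of $u$ by the universal amount $\alpha>0$. Iterating finitely many times produces $u\in C^{1,\alpha'}$ for some $\alpha'\in(0,1)$.

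Once $u\in C^{1,\alpha'}$, for each fixed $y$ the map $x\mapsto\delta u(x,y)$ is H\"older continuous with modulus bounded by $[u]_{C^{1,\alpha'}}\min(|y|,1)^{1+\alpha'}$, the weight $\min(|y|,1)^{1+\alpha'}$ coming from the second-order cancellation in $\delta u$ at small $y$. Since $F'$ is Lipschitz, this transfers to H\"older regularity of $K_h(\cdot,y)$ with a weight compatible with integration against $|y|^{-n-\sigma}$. The Schauder-type estimate for linear nonlocal operators with H\"older kernels (see Jin-Xiong \cite{JX}, Serra \cite{Ser}, or Yu \cite{Y2}) then applies to $w_h$; rescaling in $h$ and iterating once more with the now-H\"older kernel delivers $u\in C^{1+\sigma+\alpha}$.

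The main obstacle lies in this last step, where the H\"older modulus of $K_h(\cdot,y)$ must be quantified uniformly in $y$. For small $|y|$ the second-order cancellation in $\delta u$ is essential, providing the weight $\min(|y|,1)^{1+\alpha'}$ needed to tame the singular factor $|y|^{-n-\sigma}$; for large $|y|$ one relies instead on the global $L^\infty$ and Lipschitz control of $u$ coming from $u-\phi\to 0$ at infinity together with the regularity of $\phi$ in (1.8)--(1.10). Dovetailing these two regimes into a clean quantitative Schauder estimate, and then iterating to reach the target class $C^{1+\sigma+\alpha}$ exactly, is where the technical work is concentrated.
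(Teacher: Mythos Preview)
Your overall plan---bootstrap from the $C^{(2-\sigma)/2}$ estimate via translation invariance, gaining a universal $\alpha$ at each step until reaching $C^{1,\alpha'}$, then jump to $C^{1+\sigma+\alpha}$---is exactly the paper's strategy, and the first phase (difference quotients placed between the extremal operators $M^\pm_0$, then Caffarelli--Silvestre) is essentially identical to yours; writing the linearised kernel $K_h\in[\lambda,\Lambda]$ or invoking the Pucci extremal operators are the same thing.

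Two concrete points where the paper differs from your outline. First, the paper does \emph{not} work with $u$ directly but with $w=u-\phi$, deriving for it the equation
\[
\int\frac{F(\delta w(x,y))}{|y|^{n+\sigma}}\,dy=g(x,w(x))-\int\delta\phi(x,y)\,\frac{a(x,y)}{|y|^{n+\sigma}}\,dy,
\]
and then proving that this right-hand side $G$ is $C^\beta$ whenever $w$ is. It is in the estimate of $|G(x+he)-G(x)|$, specifically the near-field piece $\int_{B_1}\frac{a(x,y)}{|y|^{n+\sigma}}(\delta\phi(x+he,y)-\delta\phi(x,y))\,dy$, that the hypothesis $Lip(D^2\phi)<L_7$ is used; your sketch never says where this assumption enters.

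Second, for the final jump from $C^{1,\alpha'}$ to $C^{1+\sigma+\alpha}$ the paper does \emph{not} pass through a Schauder estimate for the linearised equation with H\"older kernel. Instead, once $w\in C^{1,\alpha'}$ the paper observes that $G$ is Lipschitz and then invokes Theorem~7.2 of Kriventsov \cite{Kri}, a $C^{1+\sigma+\alpha}$ interior estimate for translation-invariant fully nonlinear nonlocal operators with Lipschitz right-hand side, applied directly to the \emph{nonlinear} equation for $w$. This sidesteps entirely what you identify as ``the main obstacle''---quantifying the H\"older modulus of $K_h(\cdot,y)$ uniformly in $y$ and making the Schauder estimate for $w_h$ scale like $|h|$. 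Your route could in principle be made to work, but it requires additional control on the $C^\alpha$ seminorm of $G_h$ (not just its $L^\infty$ norm), which under the stated hypotheses on $g$ is not immediate; the paper's use of \cite{Kri} avoids this issue.
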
 

Once the solution is shown to be classical, a bootstrap argument gives the smoothness of the solution if we assume that $F$, $g$, and $\phi$ are smooth: \begin{thm}
Let $F$, $g$ and $\phi$ be as in the previous theorem. If one further assumes that they are smooth functions, then the solution $u$ is also smooth.
\end{thm}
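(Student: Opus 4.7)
The plan is to bootstrap the regularity of $u$ via the classical difference-quotient technique for translation-invariant nonlocal operators, combined with Schauder-type estimates for linear nonlocal equations with Hölder coefficients. Theorem 1.4 gives us the starting point: $u$ is a classical $C^{1+\sigma+\alpha}$ solution under the stated hypotheses, so all the ingredients appearing in the equation are at least pointwise well defined.

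\textbf{Linearized equation via difference quotients.} Fix a unit direction $e$, set $u_h(x) = u(x+he)$, and consider the difference quotient $w_h = (u_h - u)/h$. Because the kernel $|y|^{-n-\sigma}$ is translation-invariant, subtracting the equation at $x+he$ from the one at $x$ and dividing by $h$ yields
$$\int \frac{a_h(x,y)\,\delta w_h(x,y)}{|y|^{n+\sigma}}\, dy = G_h(x),$$
where
$$a_h(x,y) = \int_0^1 F'\bigl(\delta u(x,y) + t\,[\delta u_h(x,y)-\delta u(x,y)]\bigr)\, dt \in [\lambda,\Lambda],$$
and $G_h$ is a difference quotient of $g(x,u-\phi)$, uniformly bounded in $L^\infty$ by the local Lipschitz assumption on $g$ and the smoothness of $\phi$.

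\textbf{First gain in regularity.} Since $u$ and $\phi$ are already $C^{1+\sigma+\alpha}$ and $F$ is smooth, the coefficient $a_h(x,y)$ is Hölder in $x$ uniformly in $h$ and $y$, and $G_h$ is Hölder in $x$ uniformly in $h$. The nonlocal Schauder estimate for linear uniformly elliptic equations with Hölder kernels (as developed by Serra \cite{Ser} and Jin--Xiong \cite{JX}) then yields a uniform bound on $w_h$ in $C^{\sigma+\alpha'}_{\rm loc}$. Sending $h \to 0$ gives $\partial_e u \in C^{\sigma+\alpha'}_{\rm loc}$, hence $u \in C^{1+\sigma+\alpha'}_{\rm loc}$ with an improved exponent.

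\textbf{Iteration.} Once $u$ has gained additional regularity, both $a_h$ and $G_h$ in the next iteration inherit it, and a further application of the nonlocal Schauder estimate raises the regularity of $u$ by another order $\sigma$. Differentiating the linearized equation yet again produces equations for second and higher derivatives of $u$ whose coefficients involve $F''(\delta u)$, $F'''(\delta u)$, etc., all smooth by hypothesis, and whose right-hand sides involve derivatives of $g$ in the $x$ and $t$ slots together with derivatives of $\phi$. Iterating gives $u \in C^{k,\alpha}_{\rm loc}$ for every $k$, and hence $u \in C^\infty$.

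\textbf{Main obstacle.} The main technical point is to run the induction cleanly: at each stage one must verify that the coefficient $a_h$ and the source $G_h$ retain enough Hölder regularity in $x$ — uniformly in $y$, including near $y=0$ where $\delta u$ is controlled by the derivatives of $u$ already obtained — to feed back into the Schauder estimate. Since each step genuinely gains an order $\sigma$ (up to an $\alpha$), and since $F$, $g$, $\phi$ are smooth, no saturation occurs and the bootstrap closes, producing the desired smoothness.
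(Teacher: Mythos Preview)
Your approach is correct and is essentially the same bootstrap as in the paper: linearize, apply nonlocal Schauder (Jin--Xiong \cite{JX}, Serra \cite{Ser}), and iterate. The only difference is that the paper, having already secured $u\in C^{1+\sigma+\alpha}$, simply differentiates the equation pointwise (once to obtain a linear equation for $u_e$, then again for $u_{ee}$) rather than passing through difference quotients; since the solution is classical this direct differentiation is legitimate and slightly streamlines the argument, but your difference-quotient route reaches the same conclusion by the same mechanism.
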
 

We'd like to point out so far very few nonlocal fully nonlinear operators have been shown to admit smooth solutions, one of the major obstructions being the very rough behaviour of solutions near the boundary \cite{RosSer}. On the other hand, the previous theorem seems to suggest that smoothness should be expected in the absence of boundaries. A certain class of nonlocal fully nonlinear elliptic operators with smooth solutions in bounded domains was identified in \cite{Y1}.

This paper is organized in the following way: In the second section we recall several useful definitions and propositions about viscosity solutions to nonlocal operators. In the third section we prove the comparison principle using a regularization technique by Jensen \cite{Jen}. We also prove a H\"older estimate that does not depend on uniform ellipticity but follow rather easily from the translation-invariance. In the fourth section the existence result is established via Perron's method and a compactness argument. Then in the last section the regularity of the solution is studied.

Let us remark that this work leaves some interesting questions open. Firstly one might want to study the same problem in a general domain $\Omega$ where interesting geometry can happen. A good start might be to construct super- and sub-solutions. Secondly the superlinearity condition on $g$ can possibly be removed by some more careful estimates. Lastly, the conditions on $F$, $g$ and $\phi$ for regularity, especially the Lipschitz constraint of $D^2\phi$, could be excessive. They might be weaken substantially using some scaling argument similar to the one in \cite{Kri}.

\section{Preliminaries}
The following notion of viscosity solutions to nonlocal operators, first used in \cite{CS1}, is by now standard:
\begin{defi}
Let $I$ be a $\sigma$-order nonlocal operator and $f$ be continuous. 

An upper semicontinuous function $v\in\mathcal{L}^1(\frac{1}{(1+|y|)^{n+\sigma}}dy)$ is a subsolution to \begin{equation*}
I[u,x]=f(x) \text{ in $\Omega$}
\end{equation*} if for any $x\in\Omega$, and any $\eta$ that is $C^{1,1}$ in some open set $U\ni x$ with $\eta(x)=u(x)$ and $\eta\ge u$ in $U$, the following function 
$$\tilde{\eta}=\begin{cases}\eta &\text{in $U$}\\v &\text{outside $U$}\end{cases} $$ satisfies in the classical pointwise sense \begin{equation*}I[\tilde{\eta},x]\ge f(x).\end{equation*}
 
 If $v$ is a viscosity subsolution, we write 
\begin{equation*}
I[v,x]\ge f(x) \text{ in $\Omega$}.
\end{equation*}

The notion of supersolutions is defined in the obvious manner. And a solution is at the same time a subsolution and a supersolution.
\end{defi} 

As an important technical tool, the following notion of sup- and inf-envelopes was first introduced by Jensen \cite{Jen}. 
\begin{defi}
Let $u$ be a bounded continuous function in $\Omega$. For each $\epsilon>0$, the $\epsilon$-sup-envelope of $u$ is \begin{equation*}
u^{\epsilon}(x)=\sup_{y\in\Omega}(u(y)-\frac{|y-x|^2}{\epsilon}).
\end{equation*} The $\epsilon$-inf-envelope of $u$ is 
\begin{equation*}
u_{\epsilon}(x)=\inf_{y\in\Omega}(u(y)+\frac{|y-x|^2}{\epsilon}).
\end{equation*}
\end{defi} 

The following properties of sup-envelopes are elementary:
\begin{prop}
For $\epsilon>0$, let $u^\epsilon$ be the $\epsilon$-sup-envelope of a continuous bounded function $u$. Then for each $x$ there is $x^{\epsilon,*}$ such that \begin{equation*}
u^{\epsilon}(x)=u(x^{\epsilon,*})-\frac{|y-x^{\epsilon,*}|^2}{\epsilon}.
\end{equation*} Moreover $|x^{\epsilon,*}-x|\to 0$ as $\epsilon\to 0$.

Also, at each point $x$ there is a parabola with opening $\frac{1}{2\epsilon}$ touching $u^{\epsilon}$ from below at $x$.
\end{prop}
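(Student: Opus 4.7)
The proposition packages three elementary but useful facts about the sup-envelope $u^{\epsilon}$, and my plan is to dispatch each of them directly from the definition, using only boundedness and continuity of $u$ together with the coercivity of the quadratic penalty $|y-x|^{2}/\epsilon$.

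First I would prove the existence of a maximizer $x^{\epsilon,*}$. Pick any sequence $y_{k}\in\Omega$ with $u(y_{k})-|y_{k}-x|^{2}/\epsilon\to u^{\epsilon}(x)$. Since $u$ is bounded, if $|y_{k}|\to\infty$ then $u(y_{k})-|y_{k}-x|^{2}/\epsilon\to -\infty$, contradicting that the supremum is at least $u(x)$ (obtained by testing $y=x$). Hence $\{y_{k}\}$ stays in a bounded set; Bolzano--Weierstrass gives a subsequence converging to some $x^{\epsilon,*}$, and continuity of $u$ yields $u^{\epsilon}(x)=u(x^{\epsilon,*})-|x-x^{\epsilon,*}|^{2}/\epsilon$. (If $\Omega$ is not closed one should argue that the limit still lies in the admissible domain, or work with the closure; this is the only mildly subtle point.)

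Next, for $|x^{\epsilon,*}-x|\to 0$, I would combine the two one-sided bounds $u^{\epsilon}(x)\ge u(x)$ (testing $y=x$) and $u^{\epsilon}(x)\le \|u\|_{\infty}-|x-x^{\epsilon,*}|^{2}/\epsilon$. Subtracting gives
\begin{equation*}
|x-x^{\epsilon,*}|^{2}\le \epsilon\bigl(\|u\|_{\infty}-u(x)\bigr)\le 2\epsilon\|u\|_{\infty},
\end{equation*}
so $x^{\epsilon,*}\to x$ as $\epsilon\to 0$, uniformly in $x$ on bounded sets.

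Finally, for the tangent parabola I would simply take $P(z):=u(x^{\epsilon,*})-|z-x^{\epsilon,*}|^{2}/\epsilon$. For any $z$, the definition of $u^{\epsilon}$ ensures $u^{\epsilon}(z)\ge u(x^{\epsilon,*})-|z-x^{\epsilon,*}|^{2}/\epsilon=P(z)$, with equality at $z=x$ by the choice of $x^{\epsilon,*}$. So $P$ touches $u^{\epsilon}$ from below at $x$ and has the required quadratic opening. There is no real obstacle in the proof: the whole statement is a direct unpacking of the variational definition, and the only care needed is the compactness argument in step one when $\Omega$ is unbounded.
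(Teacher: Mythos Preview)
Your proposal is correct and follows essentially the same approach as the paper: the paper restricts the sup to the compact ball $\{|y-x|^2\le 2\epsilon\|u\|_{L^\infty}\}$ (equivalent to your maximizing-sequence plus Bolzano--Weierstrass), derives the identical bound $|x^{\epsilon,*}-x|^2\le 2\epsilon\|u\|_{L^\infty}$, and uses exactly your parabola $P(z)=u(x^{\epsilon,*})-|z-x^{\epsilon,*}|^2/\epsilon$. Your caveat about $\Omega$ being closed is moot here since the paper works with $\Omega=\mathbb{R}^n$.
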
 

\begin{proof}Let $x$ be a point in space.

If $|y-x|^2>2\epsilon\|u\|_{\mathcal{L}^{\infty}},$ then  $$u(y)-\frac{|y-x|^2}{\epsilon}\le u(y)-2\|u\|_{\mathcal{L}^{\infty}}\le -\|u\|_{\mathcal{L}^{\infty}}.$$ Consequently 
$$\sup_{y\in\Omega}(u(y)-\frac{|y-x|^2}{\epsilon})=\sup_{|y-x|^2\le 2\epsilon\|u\|_{\mathcal{L}^{\infty}}}(u(y)-\frac{|y-x|^2}{\epsilon}).$$ 

The continuity of $u$ and the compactness of $\{|y-x|^2\le 2\epsilon\|u\|_{\mathcal{L}^{\infty}}\}$ guarantee the existence of a maximizer $x^{\epsilon,*}$. It also gives the convergence of $x^{\epsilon, *}\to x$ since $$|x^{\epsilon,*}-x|\le 2\|u\|_{\mathcal{L}^{\infty}}\epsilon.$$

For each $x$, we define a parabola of opening $\frac{1}{2\epsilon}$ $$P(z)=u(x^{\epsilon,*})-\frac{|z-x^{\epsilon, *}|^2}{\epsilon}.$$ 

Then $P(x)=u^{\epsilon}(x)$ by definition of $x^{\epsilon, *}$. Also \begin{align*}u^{\epsilon}(z)&=\sup_{y}(u(y)-\frac{|y-z|^2}{\epsilon})\\&\ge u(x^{\epsilon,*})-\frac{|x^{\epsilon,*}-z|^2}{\epsilon}\\&=P(z).\end{align*}

Consequently $P$ touches $u^\epsilon$ from below at $x$.
\end{proof} 
Similar properties hold for the inf-envelopes. 

Throughout this paper, $x^{\epsilon, *}$ is a point where the value  $u^{\epsilon}(x)$ is realized in the sup. $x_{\epsilon, *}$ is a point where the value $u_{\epsilon}(x)$ is realized in the inf. 

The following proposition states a general phenomenon that at points of contact the equation is satisfied classically. We prove a version for our operator.

\begin{prop}
Suppose $u$ satisfies in the viscosity sense $$\begin{cases}\int \frac{F(\delta u(x,y))}{|y|^{n+\sigma}}dy\ge f(x) &\text{in $\R$}\\ u-\phi\to 0 &\text{at $\infty$},\end{cases}$$ and $\eta$ is a $C^{1,1}$ function that touches $u$ from above at $x_0$ in $U$. Then the integral is well-defined at $x_0$ with $$\int \frac{F(\delta u(x_0,y))}{|y|^{n+\sigma}}dy\ge f(x_0)$$ classically.
\end{prop}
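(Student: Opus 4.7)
The plan is to apply the viscosity definition on a shrinking family of test neighborhoods and then pass to the limit by (reverse) monotone convergence. For each $\rho > 0$ with $B_\rho(x_0) \subset U$, define
$$\tilde{\eta}_\rho(z) = \begin{cases} \eta(z) & z \in B_\rho(x_0), \\ u(z) & z \notin B_\rho(x_0). \end{cases}$$
Since $\eta$ is $C^{1,1}$ on $B_\rho(x_0)$ and touches $u$ from above there, the subsolution definition applied with test neighborhood $B_\rho(x_0)$ gives $\int F(\delta \tilde{\eta}_\rho(x_0, y))/|y|^{n+\sigma}\,dy \ge f(x_0)$, understood classically. The integrability is standard: for $|y| < \rho$, $|\delta \tilde{\eta}_\rho(x_0,y)| = |\delta \eta(x_0,y)| \le C|y|^2$ from the $C^{1,1}$ bound on $\eta$, which is integrable against $|y|^{-n-\sigma}$ since $\sigma < 2$; for $|y| \ge \rho$, the Lipschitz bound on $F$ reduces the tail to a quantity controlled by $\|u\|_{\mathcal{L}^1(\frac{1}{(1+|y|)^{n+\sigma}}dy)}$.

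The key observation is that $\tilde{\eta}_\rho$ is monotone non-decreasing in $\rho$: for $\rho' < \rho$, the two functions agree on $B_{\rho'}(x_0)$ and outside $B_\rho(x_0)$, while on the annulus $B_\rho(x_0) \setminus B_{\rho'}(x_0)$ we have $\tilde{\eta}_\rho = \eta \ge u = \tilde{\eta}_{\rho'}$. Since $\tilde{\eta}_\rho(x_0) = u(x_0)$ is independent of $\rho$, this passes to $\delta \tilde{\eta}_\rho(x_0, y)$ being non-decreasing in $\rho$ for every $y$, and hence to $F(\delta \tilde{\eta}_\rho(x_0, y))$ because $F$ is increasing. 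Moreover, for any fixed $y \ne 0$, once $\rho < |y|$ we have $\tilde{\eta}_\rho(x_0 \pm y) = u(x_0 \pm y)$, so $F(\delta \tilde{\eta}_\rho(x_0, y)) \downarrow F(\delta u(x_0, y))$ as $\rho \downarrow 0$.

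Fixing any $\rho_0$ with $B_{\rho_0}(x_0) \subset U$, the function $F(\delta \tilde{\eta}_{\rho_0}(x_0, y))/|y|^{n+\sigma}$ is integrable by the first step and dominates the decreasing family from above. Reverse Fatou then yields
$$\int \frac{F(\delta u(x_0, y))}{|y|^{n+\sigma}}\,dy \ge \limsup_{\rho \to 0} \int \frac{F(\delta \tilde{\eta}_\rho(x_0, y))}{|y|^{n+\sigma}}\,dy \ge f(x_0),$$
which is the claimed inequality. The delicate point is well-definedness of the left-hand side as a classical integral: $u$ is only continuous, so $\delta u(x_0, y)$ lacks a two-sided quadratic bound near $0$ and the integral could a priori be $-\infty$. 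However, the one-sided bound $\delta u(x_0, y) \le \delta \eta(x_0, y) \le C|y|^2$ makes $(F(\delta u(x_0,y)))^+/|y|^{n+\sigma}$ integrable, so the integral is well-defined in $[-\infty, +\infty)$, and the displayed inequality then rules out $-\infty$ and produces a finite value bounded below by $f(x_0)$.
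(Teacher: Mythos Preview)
Your proof is correct and follows essentially the same approach as the paper: define the shrinking family $\tilde\eta_\rho$, use the viscosity inequality at each level, exploit the monotonicity in $\rho$ (since $\eta\ge u$ on $U$ and $F$ is increasing), and pass to the limit via a monotone/reverse-Fatou argument with the $\rho_0$-level providing the integrable majorant. Your treatment of well-definedness (integrability of the positive part from the one-sided $C^{1,1}$ bound, then excluding $-\infty$ via the inequality) is in fact a bit more explicit than the paper's.
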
 

\begin{proof}
Take $r_0>0$ small so that $B_{r_0}(x_0)\subset U$. Define for each $0<r\le r_0$ 
$$\eta_{r}=\begin{cases}\eta &\text{in $B_r(x_0)$}\\ u &\text{outside $B_r(x_0)$}.\end{cases}$$

Then for $y\in B_{r_0/2}$ the $C^{1,1}$ regularity of $\eta$ gives $$|F(\delta \eta_{r_0}(x,y))|\le C|y|^2$$ and hence $$\int_{B_{r_0/2}} \frac{|F(\delta \eta_{r_0}(x,y))|}{|y|^{n+\sigma}}dy<\infty.$$ Meanwhile  for $R$ sufficiently large (depending on $x_0$), $|u(x_0+y)-\phi(x_0+y)|<1$ and $|u(x_0-y)-\phi(x_0-y)|<1$. 

Hence
 $$\int_{B_{R}^c} \frac{|F(\delta \eta_{r_0}(x,y))|}{|y|^{n+\sigma}}dy\le C(\|\phi\|_{\mathcal{L}^1(\frac{1}{(1+|y|)^{n+\sigma}})dy}+1).$$ Consequently $$\int \frac{|F(\delta \eta_{r_0}(x,y))|}{|y|^{n+\sigma}}dy<\infty.$$

Also by the subsolution property $$ \int \frac{F(\delta \eta_r(x_0,y))}{|y|^{n+\sigma}}dy\ge f(x_0) \text{ for all $r$}.$$

Now note that for $0<r_2<r_1\le r_0$, one has $\eta_{r_2}\le \eta_{r_1}$ and $\eta_{r_2}(x_0)= \eta_{r_1}(x_0)$. As a result $\delta \eta_{r_2}(x_0,\cdot)\le \delta \eta_{r_1}(x_0,\cdot), $ and $F(\delta \eta_{r_2}(x_0,\cdot))\le F(\delta \eta_{r_1}(x_0,\cdot))$. With $\tilde{\eta}_r\to u$ pointwisely, monotone convergence theorem gives the finiteness of the integral for $u$ and the inequality in the classical sense.
\end{proof} 
Again similar property holds for supersolutions.

\section{Comparison principle and H\"older regularity}

We prove the following comparison principle for viscosity sub- and supersolutions to our operator. Although we are dealing with a nonlocal operator with degeneracy, the idea is essentially the same as in \cite{CC}.  For a very similar argument in the nonlocal setting, see Caffarelli-Charro \cite{CCh}.

\begin{thm}
Let $u,v$ be two continuos functions satisfying the following in the viscosity sense
$$\begin{cases}
\int \frac{F(\delta u(x,y))}{|y|^{n+\sigma}}dy\ge g(x,u-\phi)  &\text{     in $\R$}\\ u-\phi\to 0 &\text{   at $\infty$},
\end{cases}$$
and $$\begin{cases}
\int \frac{F(\delta v(x,y))}{|y|^{n+\sigma}}dy\le g(x,v-\phi)  &\text{     in $\R$}\\ v-\phi\to 0 &\text{   at $\infty$}. 
\end{cases}$$ Then \begin{equation*}
u\le v \text{ in $\R$}.
\end{equation*} 
\end{thm} 

As a simple corollary, we have the following: 
\begin{cor}The viscosity solution to the Dirichlet problem, if exists, is unique.
\end{cor}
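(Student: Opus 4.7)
The plan is to deduce the corollary directly from the comparison principle (Theorem 3.1), which is the only nontrivial input needed. Since the comparison principle compares any viscosity subsolution with any viscosity supersolution (both satisfying the decay $\cdot - \phi \to 0$ at infinity), and since a viscosity solution is by definition both a subsolution and a supersolution, uniqueness should follow from applying the comparison principle twice with the roles swapped.

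More concretely, first I would suppose $u_1$ and $u_2$ are two viscosity solutions to the Dirichlet problem. By Definition 2.1, each $u_i$ is simultaneously a subsolution and a supersolution, and each satisfies $u_i - \phi \to 0$ at infinity. Then I would apply Theorem 3.1 with $u := u_1$ (taken as subsolution) and $v := u_2$ (taken as supersolution) to get $u_1 \le u_2$ on $\mathbb{R}^n$. Next I would apply Theorem 3.1 again with $u := u_2$ and $v := u_1$ to get $u_2 \le u_1$. Combining the two inequalities yields $u_1 \equiv u_2$.

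There is essentially no obstacle here: the corollary is a formal consequence of the comparison principle, and the only thing to check is that the hypotheses of Theorem 3.1 are symmetric in the sense that the same continuous function can legitimately play the role of either $u$ or $v$. This is immediate from the definition of viscosity solution, since continuity and the decay condition at infinity are symmetric hypotheses. Thus the proof amounts to one or two sentences invoking Theorem 3.1 in both directions.
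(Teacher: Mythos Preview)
Your proposal is correct and matches the paper's approach: the paper states the corollary immediately after Theorem~3.1 as ``a simple corollary'' without writing out a proof, and the intended argument is precisely the two-fold application of the comparison principle you describe.
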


Now we prove the theorem.
\begin{proof} Suppose, on the contrary, that $u>v$ at some point in $\R$.

Define $U=u-\phi$, $V=v-\phi$, and $U^\epsilon$ the $\epsilon$-sup-envelope of $U$, $V_{\epsilon}$ the $\epsilon$-inf-envelope of $V$. 

The trivial inequalities $U^{\epsilon}\ge u-\phi$ and $V_{\epsilon}\le v-\phi$ give some point $x$ where $U^{\epsilon}(x)-V_{\epsilon}(x)>\delta$ for some small $\delta>0$. 

Note this $\delta$ is independent of $\epsilon$.

Now for $y\in\R$ 
\begin{align*}
U^{\epsilon}(y)-V_{\epsilon}(y)&=(U(y^{\epsilon,*})-\frac{|y^{\epsilon,*}-y|^2}{\epsilon})-(V(y_{\epsilon,*})+\frac{|y_{\epsilon,*}-y|^2}{\epsilon})\\ &\le U(y^{\epsilon,*})-V(y_{\epsilon,*}). 
\end{align*} When $y\to\infty$, both $y^{\epsilon,*}$ and $y_{\epsilon,*}$ also go to infinity since they stay close to $y$. Hence the last term converges to $0$. As a result the maximum of $U^{\epsilon}-V_{\epsilon}$ is realized in some bounded region. By continuity there exists some $\bar{x}$ such that $$U^{\epsilon}(\bar{x})-V_{\epsilon}(\bar{x})=\max (U^{\epsilon}-V_{\epsilon})>\delta.$$

Now we show that at the point $\bar{x}$, $U^\epsilon$ and $V_\epsilon$ are $C^{1,1}$. To see this , note that for any $x$
\begin{equation*}
V_{\epsilon}(x)+(U^{\epsilon}(\bar{x})-V_{\epsilon}(\bar{x}))
\ge U^{\epsilon}(x)\end{equation*} with equality at $\bar{x}$. Meanwhile, Proposition 2.3 gives a parabola with opening $\frac{1}{2\epsilon}$ touching $V_{\epsilon}$ from above at $\bar{x}$. The same parabola, shifted by $(U^{\epsilon}(\bar{x})-V_{\epsilon}(\bar{x}))$, touches $U^{\epsilon}$ from above at $\bar{x}$. Proposition 2.3 also gives a parabola touching $U^{\epsilon}$ from above at $\bar{x}$. Therefore $U^{\epsilon}$ is $C^{1,1}$ at $\bar{x}$ with constant $\frac{1}{2\epsilon}$. 

Similar argument applies to $V_\epsilon$.

This regularity is inherited, from one side, by $u$ and $v$ at $\bar{x}^{\epsilon, *}$ and $\bar{x}_{\epsilon, *}$ respectively. For any $P$ touching $U^{\epsilon}$ from above at $\bar{x}$, the shifted $$\tilde{P}(x)=P(x-(\bar{x}^{\epsilon,*}-\bar{x}))+\frac{|\bar{x}^{\epsilon, *}-\bar{x}|^2}{\epsilon}$$ satisfies
\begin{align*}
\tilde{P}(x)&\ge U^{\epsilon}(x-(\bar{x}^{\epsilon,*}-\bar{x}))+\frac{|\bar{x}^{\epsilon, *}-\bar{x}|^2}{\epsilon}\\&\ge U(x)-\frac{|x-x+(\bar{x}^{\epsilon,*}-\bar{x})|^2}{\epsilon}+\frac{|\bar{x}^{\epsilon, *}-\bar{x}|^2}{\epsilon}\\&=U(x).
\end{align*} Also \begin{align*}\tilde{P}(\bar{x}^{\epsilon,*})&=P(\bar{x})+\frac{|\bar{x}^{\epsilon, *}-\bar{x}|^2}{\epsilon}\\&=U^{\epsilon}(\bar{x})+\frac{|\bar{x}^{\epsilon, *}-\bar{x}|^2}{\epsilon}\\&=U(\bar{x}^{\epsilon,*})-\frac{|\bar{x}^{\epsilon, *}-\bar{x}|^2}{\epsilon}+\frac{|\bar{x}^{\epsilon, *}-\bar{x}|^2}{\epsilon}\\&=U(\bar{x}^{\epsilon,*}).\end{align*} Thus $\tilde{P}$ touches $U$ from above at $\bar{x}^{\epsilon,*}$. Therefore $\tilde{P}+\phi$ touches $u$ from above at the same point. 

Similarly a $C^{1,1}$ function with the same structure touches $v$ from below at $\bar{x}_{\epsilon,*}.$
This is enough, by Proposition 2.4, for the subsolution and supersolution properties to be satisfied in the classical pointwise sense.

Now note that \begin{align*}\delta U^{\epsilon}(\bar{x},y)&=U^{\epsilon}(\bar{x}+y)+U^{\epsilon}(\bar{x}-y)-2U^{\epsilon}(\bar{x})\\&\ge (U(\bar{x}^{\epsilon,*}+y)-\frac{|\bar{x}^{\epsilon,*}+y-(\bar{x}+y)|^2}{\epsilon})+(U(\bar{x}^{\epsilon,*}-y)-\frac{|\bar{x}^{\epsilon,*}-y-(\bar{x}-y)|^2}{\epsilon})\\&-2(U(\bar{x}^{\epsilon,*})-\frac{|\bar{x}^{\epsilon,*}-\bar{x}|^2}{\epsilon})\\&\ge (u(\bar{x}^{\epsilon,*}+y)-\phi(\bar{x}^{\epsilon,*}+y)-\frac{|\bar{x}^{\epsilon,*}-\bar{x}|^2}{\epsilon})+(u(\bar{x}^{\epsilon,*}-y)-\phi(\bar{x}^{\epsilon,*}-y)-\frac{|\bar{x}^{\epsilon,*}-\bar{x}|^2}{\epsilon})\\&-2(u(\bar{x}^{\epsilon,*})-\phi(\bar{x}^{\epsilon,*})-\frac{|\bar{x}^{\epsilon,*}-\bar{x}|^2}{\epsilon})\\&=\delta u(\bar{x}^{\epsilon, *},y)-\delta\phi(\bar{x}^{\epsilon, *},y).
\end{align*} Similarly $$\delta V_{\epsilon}(\bar{x},y)\le \delta v(\bar{x}_{\epsilon, *},y)-\delta\phi(\bar{x}_{\epsilon, *},y).$$
Now $\bar{x}$ being a maximum point for $U^{\epsilon}-V_{\epsilon}$, one has $$\delta(U^\epsilon-V_{\epsilon})(\bar{x},y)\le 0,$$ which yields  $$\delta U^\epsilon(\bar{x},y)\le\delta V_{\epsilon}(\bar{x},y).$$
Combining these three inequalities one obtains
\begin{equation}\delta u(\bar{x}^{\epsilon,*},y)\le \delta v(\bar{x}_{\epsilon, *},y)-\delta\phi(\bar{x}_{\epsilon, *},y)+\delta\phi(\bar{x}^{\epsilon, *},y).\end{equation} 

By monotonicity of $F$ and the fact that we have inequalities in the classical sense, one has \begin{align*}
\int\frac{F(\delta u(\bar{x}^{\epsilon,*},y))}{|y|^{n+\sigma}}dy&\le \int\frac{F(\delta v(\bar{x}_{\epsilon, *},y)-\delta\phi(\bar{x}_{\epsilon, *},y)+\delta\phi(\bar{x}^{\epsilon, *},y))}{|y|^{n+\sigma}}dy\\&\le  \int\frac{F(\delta v(\bar{x}_{\epsilon, *},y))}{|y|^{n+\sigma}}dy+L_1 \int\frac{|\delta\phi(\bar{x}_{\epsilon, *},y)-\delta\phi(\bar{x}^{\epsilon, *},y)|}{|y|^{n+\sigma}}dy.
\end{align*}As a result,
\begin{align*}
L_1\int\frac{|\delta\phi(\bar{x}_{\epsilon, *},y)-\delta\phi(\bar{x}^{\epsilon, *},y)|}{|y|^{n+\sigma}}dy&\ge\int\frac{F(\delta u(\bar{x}^{\epsilon,*},y))}{|y|^{n+\sigma}}dy-\int\frac{F(\delta v(\bar{x}_{\epsilon, *},y))}{|y|^{n+\sigma}}dy\\&\ge g(\bar{x}^{\epsilon,*},u(\bar{x}^{\epsilon,*})-\phi(\bar{x}^{\epsilon,*}))- g(\bar{x}_{\epsilon,*},v(\bar{x}_{\epsilon,*})-\phi(\bar{x}_{\epsilon,*}))\\&= g(\bar{x}^{\epsilon,*},U(\bar{x}^{\epsilon,*}))- g(\bar{x}_{\epsilon,*},V(\bar{x}_{\epsilon,*}))\\&=  g(\bar{x}^{\epsilon,*},U^{\epsilon}(\bar{x})+\frac{|\bar{x}-\bar{x}^{\epsilon,*}|^2}{\epsilon})- g(\bar{x}_{\epsilon,*},V_{\epsilon}(\bar{x})-\frac{|\bar{x}-\bar{x}_{\epsilon,*}|^2}{\epsilon})\\&\ge g(\bar{x}^{\epsilon,*},U^{\epsilon}(\bar{x}))- g(\bar{x}_{\epsilon,*},V_{\epsilon}(\bar{x}))\\&\ge g(\bar{x}_{\epsilon,*},U^{\epsilon}(\bar{x}))- g(\bar{x}_{\epsilon,*},V_{\epsilon}(\bar{x}))-L_2|\bar{x}_{\epsilon,*}-\bar{x}^{\epsilon,*}|\\&\ge \mu\delta -L_2|\bar{x}_{\epsilon,*}-\bar{x}^{\epsilon,*}|.
\end{align*}
Now note that when $\epsilon\to 0$, the left hand side is of order $o(1)$. To see this, note that

\begin{align*}\frac{|\delta\phi(\bar{x}_{\epsilon, *},y)-\delta\phi(\bar{x}^{\epsilon, *},y)|}{|y|^{n+\sigma}}&\le\frac{|\delta\phi(\bar{x}_{\epsilon, *},y)|+|\delta\phi(\bar{x}^{\epsilon, *},y)|}{|y|^{n+\sigma}}\\&\le 2L_3\frac{|y|^2}{|y|^{n+\sigma}}\chi_{B_{10}}(y)+8\frac{|\phi(y)|}{|y|^{n+\sigma}}\chi_{B_{10}^c}(y)
\end{align*}due to the $C^{1,1}$ regularity of $\phi$ and its $\mathcal{L}^1(\frac{1}{(1+|y|)^{n+\sigma}}dy)$ integrability, the last term in an integrable function.. Thus $\bar{x}^{\epsilon,*}\to\bar{x}$ and $\bar{x}_{\epsilon,*}\to\bar{x}$ together with the dominated convergence theorem gives $$\int\frac{|\delta\phi(\bar{x}_{\epsilon, *},y)-\delta\phi(\bar{x}^{\epsilon, *},y)|}{|y|^{n+\sigma}}dy=o(1).$$


Also,
$L_2|\bar{x}_{\epsilon,*}-\bar{x}^{\epsilon,*}|=o(1)$. Therefore the previous inequality leads to $$\mu\delta\le o(1),$$ a contradiction.
\end{proof} 

Once a comparison principle is established, the solution naturally inherits some first regularity from the boundary datum. We show here that the H\"older seminorm of a solution is controlled. \begin{thm}
Let $u$ solve the Dirichlet problem in the viscosity sense. Then $u$ is $C^{\frac{2-\sigma}{2}}$ with estimate \begin{equation}
[u]_{C^{\frac{2-\sigma}{2}}}\le C(\|u-\phi\|_{\mathcal{\infty}}+1),
\end{equation} where the constant $C$ depends on the constants in (1.4)-(1.11).
\end{thm} 

This theorem clearly follows from the following estimate at small-scales.
\begin{lem}
Let $u$ be as in Theorem 3.3, $e\in\mathbb{S}^{n-1}$ and $|h|\le 1$, then \begin{equation} 
|u(x+he)-u(x)|\le C|h|^{\frac{2-\sigma}{2}},
\end{equation} where the constant $C$ depends on $L_2, L_3, L_4$ and $\mu$.
\end{lem}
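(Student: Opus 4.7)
The plan is to exploit the translation invariance of the integral operator and compensate for $\phi$'s failure to be translation invariant by a smooth correction. Set $\xi(x) = \phi(x) - \phi(x+he)$ and
\[
\tilde u(x) = u(x+he) + \xi(x).
\]
Then $\tilde u - \phi = u(\cdot + he) - \phi(\cdot + he) \to 0$ at infinity, so $\tilde u$ has the correct boundary behavior. It will suffice to show $|u - \tilde u| \le C|h|^{(2-\sigma)/2}$: combined with $|\xi(x)| \le L_4|h| \le L_4|h|^{(2-\sigma)/2}$ (since $|h|\le 1$ and $\sigma > 1$), this yields the lemma.

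The quantitative input is the estimate
\[
\int \frac{|\delta\xi(x,y)|}{|y|^{n+\sigma}}\, dy \le C|h|^{(2-\sigma)/2},
\]
obtained by splitting at radius $r\sim\sqrt{|h|}$ and using the two pointwise bounds $|\delta\xi(x,y)| \le 2L_3|y|^2$ (from $D^2\phi \le L_3$) on $|y|\le r$ and $|\delta\xi(x,y)| \le 4L_4|h|$ (from the Lipschitz bound on $\phi$) on $|y|> r$; both pieces scale like $|h|^{(2-\sigma)/2}$.

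Since $u(\cdot+he)$ solves the translated equation, a standard viscosity-solution manipulation---pushing a $C^{1,1}$ touching function for $\tilde u$ to one for $u(\cdot + he)$ by subtracting the smooth $\xi$---combined with the Lipschitz bound on $F$ and the integral estimate above shows that in the viscosity sense
\[
g(x, \tilde u - \phi) - C'|h|^{(2-\sigma)/2} \le \int \frac{F(\delta \tilde u(x,y))}{|y|^{n+\sigma}}\, dy \le g(x, \tilde u - \phi) + C'|h|^{(2-\sigma)/2},
\]
where the Lipschitz bound on $g$ in its first variable is used to replace $x+he$ by $x$. The strict monotonicity (1.7) of $g$ in its second variable then absorbs the error: $\tilde u_- := \tilde u - (C'/\mu)|h|^{(2-\sigma)/2}$ is a genuine viscosity subsolution and $\tilde u_+ := \tilde u + (C'/\mu)|h|^{(2-\sigma)/2}$ a genuine viscosity supersolution of the original equation.

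Finally, one compares $u$ with $\tilde u_\pm$. The hypothesis in Theorem 3.1 that both competitors approach $\phi$ at infinity is not quite met, since $\tilde u_\pm - \phi \to \pm(C'/\mu)|h|^{(2-\sigma)/2}$; however, inspection of the proof reveals that this matching is used only to ensure that a strictly positive maximum of $U^\epsilon - V_\epsilon$ is attained in a bounded region, and here $u - \tilde u_+$ (respectively $\tilde u_- - u$) has a strictly negative limit at infinity, which suffices just as well. The remainder of the sup/inf-envelope argument of Theorem 3.1 goes through verbatim, yielding $\tilde u_- \le u \le \tilde u_+$, i.e.\ $|u - \tilde u| \le (C'/\mu)|h|^{(2-\sigma)/2}$. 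The main obstacle I anticipate is the careful viscosity-solution bookkeeping in the previous paragraph: one must verify, using the truncated $C^{1,1}$ envelope $\tilde\eta$ of Definition 2.1, that the perturbation by the smooth $\xi$ introduces only an error controlled by the explicit integral bound on $\delta\xi$, independently of the particular contact function.
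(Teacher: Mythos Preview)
Your argument is correct and rests on the same ingredients as the paper's---the translation invariance of the operator, the Lipschitz bound on $F$, the monotonicity of $g$, and the key integral estimate
\[
\int \frac{|\delta\phi(x,y)-\delta\phi(x+he,y)|}{|y|^{n+\sigma}}\,dy \le C|h|^{(2-\sigma)/2}
\]
obtained by splitting at scale $\sqrt{|h|}$.  The difference is organizational.  The paper does not build auxiliary barriers $\tilde u_\pm$; instead it sets $U=(u-\phi)(\cdot+he)$, $V=(u-\phi)(\cdot)$ and reruns the entire sup/inf-envelope machinery of Theorem~3.1 inside the proof, arriving at the contradiction $\mu C L_4 h^{(2-\sigma)/2}\le L_2 h + C_{n,\sigma}L_4^{(2-\sigma)/2}L_3^{\sigma/2}h^{(2-\sigma)/2}$ directly.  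Your route is more modular: you absorb the error into $g$ via its strict monotonicity, produce genuine sub/supersolutions, and invoke Theorem~3.1 as a black box---at the cost of observing that its proof only needs the difference to tend to something nonpositive at infinity, not to zero.  Either way the same estimate drives the exponent $(2-\sigma)/2$, and your packaging makes transparent why the comparison principle alone, together with the integral bound on $\delta\xi$, is what is really being used.
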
 

\begin{proof}
Suppose, on the contrary, that for some $x_0\in\R, e\in\mathbb{S}^{n-1}$ and $0<h<1$ one has $$u(x_0+he)-u(x_0)>(C+1)L_4h^{\frac{2-\sigma}{2}},$$where $C$ is a large constant to be chosen. Then $$(u(x_0+he)-\phi(x_0+he))-(u(x_0)-\phi(x_0))>(C+1)L_4h^{\frac{2-\sigma}{2}}-L_4h>CL_4h^{\frac{2-\sigma}{2}}.$$

Define $U(\cdot)=(u-\phi)(\cdot+he)$ and $V(\cdot)=(u-\phi)(\cdot)$, and $U^\epsilon$, $V_{\epsilon}$ the $\epsilon$-sup- and $\epsilon$-inf-envelope of $U$ and $V$ respectively. Then $U^{\epsilon}\ge U$ and $V_{\epsilon}\le V$ gives $$\sup(U^\epsilon-V_\epsilon)>CL_4h^{\frac{2-\sigma}{2}}.$$

Note that both $U^{\epsilon}$ and $V_\epsilon$ vanish at infinity, and hence we can find $\bar{x}\in\R$ such that $$(U^\epsilon-V_\epsilon)(\bar{x})= \sup(U^\epsilon-V_\epsilon).$$

Due to maximality, at this point $$\delta (U^\epsilon-V_\epsilon)(\bar{x},\cdot)\le 0.$$ As a result $\delta U^\epsilon(\bar{x},\cdot)\le \delta V_\epsilon(\bar{x},\cdot)$, and with similar arguments as for the proof of (3.1) one obtains
\begin{align*}\delta u(\bar{x}_{\epsilon,*},y)-\delta\phi(\bar{x}_{\epsilon,*},y)&=\delta V(\bar{x}_{\epsilon,*},y)\\&\ge \delta V_\epsilon(\bar{x},y)\\&\ge \delta U^\epsilon(\bar{x},y)\\&\ge \delta U(\bar{x}^{\epsilon,*},y)\\&=\delta u(\bar{x}^{\epsilon,*}+he,y)-\delta\phi(\bar{x}^{\epsilon,*}+he,y).
\end{align*} That is,$$\delta u(\bar{x}_{\epsilon,*},y)-\delta\phi(\bar{x}_{\epsilon,*},y)+\delta\phi(\bar{x}^{\epsilon,*}+he,y)\ge \delta u(\bar{x}^{\epsilon,*}+he,y).$$

Again using similar arguments as in the previous theorem, we can show the sub- and super-solution properties are satisfied in the classical sense at points $\bar{x}^{\epsilon,*}+he$ and $\bar{x}_{\epsilon,*}$ respectively. As a result,
\begin{align*}
g(\bar{x}^{\epsilon,*}+he, (u-\phi)(\bar{x}^{\epsilon,*}+he))&\le \int\frac{F(\delta u(\bar{x}^{\epsilon,*}+he,y))}{|y|^{n+\sigma}}dy\\&\le \int\frac{F(\delta u(\bar{x}_{\epsilon,*},y)-\delta\phi(\bar{x}_{\epsilon,*},y)+\delta\phi(\bar{x}^{\epsilon,*}+he,y))}{|y|^{n+\sigma}}dy\\&\le \int\frac{F(\delta u(\bar{x}_{\epsilon,*},y))}{|y|^{n+\sigma}}dy+L_1\int\frac{|\delta\phi(\bar{x}_{\epsilon,*},y)-\delta\phi(\bar{x}^{\epsilon,*}+he,y)|}{|y|^{n+\sigma}}dy\\&\le \int\frac{F(\delta u(\bar{x}_{\epsilon,*},y))}{|y|^{n+\sigma}}dy+L_1\int\frac{|\delta\phi(\bar{x}_{\epsilon,*},y)-\delta\phi(\bar{x}^{\epsilon,*},y)|}{|y|^{n+\sigma}}dy\\&+L_1\int\frac{|\delta\phi(\bar{x}^{\epsilon,*},y)-\delta\phi(\bar{x}^{\epsilon,*}+he,y)|}{|y|^{n+\sigma}}dy\\&\le g(\bar{x}_{\epsilon,*},(u-\phi)(\bar{x}_{\epsilon,*}))+L_1\int\frac{|\delta\phi(\bar{x}_{\epsilon,*},y)-\delta\phi(\bar{x}^{\epsilon,*},y)|}{|y|^{n+\sigma}}dy\\&+L_1\int\frac{|\delta\phi(\bar{x}^{\epsilon,*},y)-\delta\phi(\bar{x}^{\epsilon,*}+he,y)|}{|y|^{n+\sigma}}dy.
\end{align*}
Using the definition for the points $\bar{x}^{\epsilon,*}$ and $\bar{x}_{\epsilon,*}$, one further obtains
\begin{align*}g(\bar{x}^{\epsilon,*}+he, U^{\epsilon}(\bar{x}))&\le g(\bar{x}^{\epsilon,*}+he, U^{\epsilon}(\bar{x})+\frac{|\bar{x}-\bar{x}^{\epsilon,*}|^2}{\epsilon})\\&= g(\bar{x}^{\epsilon,*}+he, U(\bar{x}^{\epsilon,*}))\\&=g(\bar{x}^{\epsilon,*}+he, (u-\phi)(\bar{x}^{\epsilon,*}+he)).
\end{align*}And $$g(\bar{x}_{\epsilon,*}, (u-\phi)(\bar{x}_{\epsilon,*}))\ge g(\bar{x}_{\epsilon,*}, V_\epsilon(\bar{x})).$$

Combining these inequalities with the Lipschitz continuity and monotonicity of $g$, one has

\begin{align*}
-L_2h+\mu CL_4h^{\frac{2-\sigma}{2}}&\le  g(\bar{x}^{\epsilon,*}+he, U^{\epsilon}(\bar{x}))-g(\bar{x}_{\epsilon,*}, V_\epsilon(\bar{x}))+L_2|\bar{x}^{\epsilon,*}-\bar{x}_{\epsilon,*}|\\& \le L_1\int\frac{|\delta\phi(\bar{x}^{\epsilon,*},y)-\delta\phi(\bar{x}^{\epsilon,*}+he,y)|}{|y|^{n+\sigma}}dy\\&+L_1\int\frac{|\delta\phi(\bar{x}_{\epsilon,*},y)-\delta\phi(\bar{x}^{\epsilon,*},y)|}{|y|^{n+\sigma}}dy+L_2|\bar{x}^{\epsilon,*}-\bar{x}_{\epsilon,*}|\\&=L_1\int\frac{|\delta\phi(\bar{x}^{\epsilon,*},y)-\delta\phi(\bar{x}^{\epsilon,*}+he,y)|}{|y|^{n+\sigma}}dy+o(1)
\end{align*}as $\epsilon\to 0$.

We now estimate the remaining term on the right-hand side. 

For $R>0$,  we can split the integral into 
\begin{equation*}\int_{B_R}\frac{|\delta\phi(\bar{x}^{\epsilon,*},y)-\delta\phi(\bar{x}^{\epsilon,*}+he,y)|}{|y|^{n+\sigma}}dy+\int_{B_R^c}\frac{|\delta\phi(\bar{x}^{\epsilon,*},y)-\delta\phi(\bar{x}^{\epsilon,*}+he,y)|}{|y|^{n+\sigma}}dy.
\end{equation*}

For the first term,
\begin{align*}\int_{B_R}\frac{|\delta\phi(\bar{x}^{\epsilon,*},y)-\delta\phi(\bar{x}^{\epsilon,*}+he,y)|}{|y|^{n+\sigma}}dy&\le \int_{B_R}\frac{|\delta\phi(\bar{x}^{\epsilon,*},y)|}{|y|^{n+\sigma}}dy+\int_{B_R}\frac{|\delta\phi(\bar{x}^{\epsilon,*}+he,y)|}{|y|^{n+\sigma}}dy\\&\le 2L_3\int_{B_R}\frac{|y|^2}{|y|^{n+\sigma}}dy\\&= 2L_3R^{2-\sigma}.
\end{align*}

For the second,
\begin{align*}\int_{B_R^c}\frac{|\delta\phi(\bar{x}^{\epsilon,*},y)-\delta\phi(\bar{x}^{\epsilon,*}+he,y)|}{|y|^{n+\sigma}}dy&\le \int_{B_R^c}\frac{|\phi(\bar{x}^{\epsilon,*}+y)-\phi(\bar{x}^{\epsilon,*}+he+y)|}{|y|^{n+\sigma}}dy\\&+\int_{B_R^c}\frac{|\phi(\bar{x}^{\epsilon,*}-y)-\phi(\bar{x}^{\epsilon,*}+he-y)|}{|y|^{n+\sigma}}dy
\\&+2\int_{B_R^c}\frac{|\phi(\bar{x}^{\epsilon,*})-\phi(\bar{x}^{\epsilon,*}+he)|}{|y|^{n+\sigma}}dy\\&\le 4L_4h\int_{B_R^c}\frac{1}{|y|^{n+\sigma}}dy\\&\le 4L_4hR^{-\sigma}.
\end{align*}
 
 By choosing $R^2=(L_4h/L_3)$, one obtains 
$$\int\frac{|\delta\phi(\bar{x}^{\epsilon,*},y)-\delta\phi(\bar{x}^{\epsilon,*}+he,y)|}{|y|^{n+\sigma}}dy\le C_{n,\sigma}L_4^{\frac{2-\sigma}{2}}L_3^{\frac{\sigma}{2}}h^{\frac{2-\sigma}{2}}.$$

Consequently, $$\mu CL_4h^{\frac{2-\sigma}{2}}\le L_2h+C_{n,\sigma}L_4^{\frac{2-\sigma}{2}}L_3^{\frac{\sigma}{2}}h^{\frac{2-\sigma}{2}}.$$ This leads to a contradiction once we choose $C$ to be sufficiently large, depending on $L_2, L_3, L_4$ and $\mu$. 

\end{proof}


\section{Existence of viscosity solutions}
In this section we prove the existence of viscosity solutions to the Dirichlet problem under some extra assumptions. These assumptions are needed for the construction of a supersolution to our problem. Then we use Perron's method to solve the Dirichlet problem if the operator is uniformly elliptic. The H\"older regularity estimate in the previous section gives enough compactness for us to approximate our degenerate operator with a sequence  of uniform elliptic operators. 

As already commented in Introduction, we have a natural subsolution to the problem:
\begin{lem}
$\phi$ is a classical subsolution to the Dirichlet problem.
\end{lem}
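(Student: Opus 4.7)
The statement is essentially a sanity check that $\phi$ qualifies as a subsolution of its own Dirichlet problem; the plan is to verify pointwise the two conditions (integrability of the operator at $\phi$, and the correct sign of the resulting equality) and then observe that the boundary condition holds trivially.

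First I would argue that $\phi$ is a classical test function for the operator at every $x\in\R$. By (1.9), $\phi$ is $C^{1,1}$ with $0\le D^2\phi\le L_3$, so a second-order Taylor expansion gives $0\le \delta\phi(x,y)\le L_3|y|^2$ for all $y$. Since $F$ is Lipschitz with $F(0)=0$ by (1.3)--(1.4), this yields $|F(\delta\phi(x,y))|\le L_1 L_3|y|^2$, which is integrable against $|y|^{-n-\sigma}$ on $B_1$ because $\sigma<2$. Away from the origin I would bound $|\delta\phi(x,y)|$ crudely by $|\phi(x+y)|+|\phi(x-y)|+2|\phi(x)|$ and use (1.11) together with (1.10) to see that this function is integrable against $|y|^{-n-\sigma}$ at infinity (either $L_4|y|$ from the Lipschitz estimate, which integrates because $\sigma>1$, or directly from the $\mathcal{L}^1(\tfrac{1}{(1+|y|)^{n+\sigma}}dy)$ norm). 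Hence $\int F(\delta\phi(x,y))|y|^{-n-\sigma}\,dy$ is finite and is computed classically.

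Next comes the essential observation: convexity of $\phi$ via (1.9) gives $\delta\phi(x,y)\ge 0$ pointwise. Monotonicity of $F$ together with $F(0)=0$ then yields $F(\delta\phi(x,y))\ge 0$ for every $x,y$, so
\[
\int \frac{F(\delta\phi(x,y))}{|y|^{n+\sigma}}\,dy \ge 0.
\]
On the right-hand side, $u=\phi$ gives $u-\phi\equiv 0$, so by (1.8) we have $g(x,0)=0$ for all $x$. Combining, $\int F(\delta\phi(x,y))|y|^{-n-\sigma}\,dy\ge g(x,\phi-\phi)$ classically, which is precisely the subsolution inequality.

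Finally, the boundary condition $\phi-\phi\to 0$ at infinity is tautological. There is really no obstacle here: the only mildly substantive point is the integrability check, and that is taken care of by (1.9)--(1.11) exactly as in Proposition 2.4. The key structural ingredients are the convexity of $\phi$, the monotonicity of $F$ with $F(0)=0$, and the normalization $g(x,0)=0$.
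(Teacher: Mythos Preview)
Your proof is correct and follows exactly the reasoning the paper intends: the paper does not write out a proof for this lemma, treating it as immediate from the remarks in the Introduction (``The convexity of $\phi$ says that it is a natural subsolution''), and your argument unpacks precisely those ingredients---convexity of $\phi$ giving $\delta\phi\ge 0$, monotonicity of $F$ with $F(0)=0$, the normalization $g(x,0)=0$, and the integrability check via (1.9)--(1.11) as in Proposition~2.4.
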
 

We also assume that $\phi$ is `close to a cone' at $\infty$. This notion was used in Caffarelli-Charro \cite{CCh} to give decay of $\Delta^{\sigma/2}\phi$, which is crucial for an upper bound on the operator.

\begin{defi}
We say $\phi$ is close to a cone at $\infty$ if $\phi=\Gamma+\eta$ near infinity, where $\Gamma$ is a cone, and for some $C$ and $0<\epsilon<n$, one has $$|\eta(x)|\le C|x|^{-\epsilon}, |\nabla\eta(x)|\le C|x|^{-1-\epsilon}, |D^2\eta(x)|\le C|x|^{-2-\epsilon}.$$
\end{defi} 

\begin{lem}
If $\phi$ is close to a cone at $\infty$, then $$\Delta^{\sigma/2}\phi(x)\le C\min\{1,\frac{1}{|x|^{\sigma-1}}\}.$$
\end{lem}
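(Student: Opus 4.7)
The plan is to prove separately the uniform bound $\Delta^{\sigma/2}\phi(x)\le C$ and the decay $\Delta^{\sigma/2}\phi(x)\le C|x|^{1-\sigma}$ for large $|x|$, then take the minimum. For the uniform bound I split $\int\frac{\delta\phi(x,y)}{|y|^{n+\sigma}}\,dy$ at $|y|=1$: on $\{|y|\le 1\}$, condition (1.8) gives $0\le\delta\phi(x,y)\le L_3|y|^2$, so the integrand is dominated by $L_3|y|^{2-n-\sigma}$, integrable because $\sigma<2$; on $\{|y|>1\}$, condition (1.9) gives $|\delta\phi(x,y)|\le 2L_4|y|$, so the integrand is dominated by $2L_4|y|^{1-n-\sigma}$, integrable because $\sigma>1$. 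This already gives the constant $1$ in the $\min$.

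For the decay I use $\phi=\Gamma+\eta$ outside some $B_{R_0}$ and extend $\eta:=\phi-\Gamma$ globally to a bounded Lipschitz function that retains the stated decay of $\eta$, $\nabla\eta$, $D^2\eta$ outside $B_{R_0}$. For $|x|\ge 4R_0$ I write $\Delta^{\sigma/2}\phi(x)=\Delta^{\sigma/2}\Gamma(x)+\Delta^{\sigma/2}\eta(x)$. The $1$-homogeneity $\Gamma(\lambda z)=\lambda\Gamma(z)$, together with the change of variables $y\mapsto|x|y$, yields
$$\Delta^{\sigma/2}\Gamma(x)=|x|^{1-\sigma}\,\Delta^{\sigma/2}\Gamma(x/|x|).$$
The sphere profile $\hat x\mapsto\Delta^{\sigma/2}\Gamma(\hat x)$ is well-defined because $\Gamma$ is $C^{1,1}$ away from the origin (inherited from $\phi\in C^{1,1}$ and from $\eta\in C^2$ outside $B_{R_0}$, then propagated by homogeneity) and globally Lipschitz, so the integrand is integrable near $y=0$ via $|y|^{2-n-\sigma}$ and near infinity via $|y|^{1-n-\sigma}$ (using $\sigma>1$). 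Continuity in $\hat x$ then yields a uniform bound on the compact sphere $\mathbb{S}^{n-1}$.

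For the $\eta$ piece I split the $y$-integral at $|y|=|x|/2$. On $\{|y|\le|x|/2\}$ the points $x$ and $x\pm y$ all lie in $\{|z|\ge|x|/2\}$, where $|D^2\eta|\le C|x|^{-2-\epsilon}$; hence $|\delta\eta(x,y)|\le C|x|^{-2-\epsilon}|y|^2$ and integration produces a contribution of order $|x|^{-\sigma-\epsilon}$. On $\{|y|>|x|/2\}$ I bound each of the three terms of $\delta\eta(x,y)$ via a global estimate $|\eta(z)|\le C(1+|z|)^{-\epsilon}$, change variables $z=x\pm y$ for the translated pieces, and split the $z$-domain into $|z|\le|x|/4$, $|z|\in[|x|/4,4|x|]$, and $|z|>4|x|$; using that $|z-x|$ is comparable to $|x|$ or to $|z|$ in the respective regimes, each piece is $O(|x|^{-\sigma})$ at worst. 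Hence $|\Delta^{\sigma/2}\eta(x)|\le C|x|^{-\sigma}$, which is absorbed into $|x|^{1-\sigma}$ whenever $|x|\ge 1$.

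Combining the two pieces yields $\Delta^{\sigma/2}\phi(x)\le C|x|^{1-\sigma}$ for large $|x|$, and together with the uniform bound this gives $\Delta^{\sigma/2}\phi(x)\le C\min\{1,|x|^{1-\sigma}\}$. The main technical nuisance I expect is the bookkeeping on $\{|y|>|x|/2\}$ for the $\eta$ piece, where the three terms of $\delta\eta$ must be organized via different substitutions, together with the verification that the $1$-homogeneous cone's fractional Laplacian is a genuinely bounded continuous function on $\mathbb{S}^{n-1}$; everything else reduces to standard homogeneity scaling of the fractional Laplacian and dominated convergence.
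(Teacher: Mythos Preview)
Your argument is correct. The paper itself does not give a proof of this lemma; it simply cites Lemma~6.1 of Caffarelli--Charro \cite{CCh}. What you have written is a self-contained derivation that supplies the content of that reference.

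The structure you chose---a uniform bound from the $C^{1,1}$ and Lipschitz control on $\phi$, and then for large $|x|$ the splitting $\phi=\Gamma+\eta$ with the scaling identity $\Delta^{\sigma/2}\Gamma(x)=|x|^{1-\sigma}\Delta^{\sigma/2}\Gamma(x/|x|)$ plus a direct estimate on $\Delta^{\sigma/2}\eta$---is the standard route and is in spirit the argument of \cite{CCh}. A few minor remarks: your justification that $\Gamma$ is $C^{1,1}$ away from the origin is correct (it follows from $\Gamma=\phi-\eta$ outside $B_{R_0}$ together with $(-1)$-homogeneity of $D^2\Gamma$), and the dominated-convergence argument for continuity of $\hat x\mapsto\Delta^{\sigma/2}\Gamma(\hat x)$ on $\mathbb{S}^{n-1}$ goes through with the uniform majorants $C|y|^{2-n-\sigma}\chi_{\{|y|<1/2\}}$ and $C|y|^{1-n-\sigma}\chi_{\{|y|\ge 1/2\}}$. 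The ``bookkeeping'' on $\{|y|>|x|/2\}$ for the $\eta$ piece is routine; in fact, if you also exploit the $|z|^{-\epsilon}$ decay of $\eta$ on the inner region $\{|z|\le|x|/4\}$ (rather than just boundedness), every piece is $O(|x|^{-\sigma-\epsilon})$, matching the cone contribution's lower-order status. But your weaker $O(|x|^{-\sigma})$ bound already suffices for the stated conclusion.
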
 

\begin{proof}
See Lemma 6.1 in \cite{CCh}.
\end{proof} 

In the following two lemmas, we show the construction of supersolutions under certain extra conditions on $F$ or $g$. We point out that in both cases the supersolution is `universal' in the sense that it does not depend on the derivative of $F$.

\begin{lem}
If $\phi$ is close to a cone at $\infty$ and $g(x,t)/t\to+\infty$ as $t\to+\infty$ locally uniformly in $x$, then there is a $\bar{u}$ continuous and
$$\begin{cases}\int\frac{F(\delta\bar{u}(x,y))}{|y|^{n+\sigma}}dy\le g(x,\bar{u}(x)-\phi(x))&\text{in $\R$}\\ \bar{u}-\phi\to 0&\text{at $\infty$}.\end{cases}$$
\end{lem}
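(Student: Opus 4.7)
The plan is to construct a supersolution of the form $\bar u = \phi + M\psi$, where $\psi(x) = (1+|x|^2)^{-\beta/2}$ for a fixed $\beta\in(0,\sigma-1]$ (available since $\sigma>1$) and $M>0$ is a large constant to be chosen. Such $\bar u$ is continuous and $\bar u-\phi = M\psi\to 0$ at infinity, so the task reduces to verifying the differential inequality classically at every $x$.

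I would bound the operator from above via two ingredients. First, since $F$ is increasing, Lipschitz with constant $L_1$, and satisfies $F(0)=0$, one has $F(t)\le L_1 t_+$ for every $t\in\mathbb{R}$. Second, the convexity of $\phi$ gives $\delta\phi\ge 0$, so $(\delta\bar u)_+ = (\delta\phi + M\delta\psi)_+\le \delta\phi + M|\delta\psi|$ pointwise. Integrating,
\[
\int \frac{F(\delta\bar u(x,y))}{|y|^{n+\sigma}}\,dy \le L_1\int \frac{\delta\phi(x,y)}{|y|^{n+\sigma}}\,dy + L_1 M\int \frac{|\delta\psi(x,y)|}{|y|^{n+\sigma}}\,dy.
\]
Lemma 4.3 bounds the first summand by $C\min\{1,|x|^{1-\sigma}\}$. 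For the $\psi$-summand I would split the integral at $|y|=|x|/2$: on the near part use Taylor's theorem together with $|D^2\psi(z)|\le C(1+|z|)^{-\beta-2}$; on the far part change variables and exploit the power decay of $\psi$. Both pieces contribute $\le C(1+|x|)^{-\beta-\sigma}$, and a uniform bound $\le C_\psi$ holds everywhere.

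To compare the resulting estimate with $g(x,M\psi(x))$, I would consider two regimes. For $|x|\ge R_0$ large, use the linear lower bound $g(x,M\psi(x))\ge \mu M\psi(x)\ge c_1\mu M(1+|x|)^{-\beta}$ coming from (1.7) and $g(x,0)=0$. Since $\beta\le\sigma-1$, the term $|x|^{1-\sigma}$ is dominated by $|x|^{-\beta}$; since $(1+|x|)^{-\beta-\sigma}=(1+|x|)^{-\sigma}\cdot(1+|x|)^{-\beta}$, the extra factor $R_0^{-\sigma}$ can be made arbitrarily small. Fixing $R_0$ large and then $M$ large (depending on $R_0$) closes this regime. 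For $|x|\le R_0$, $\psi(x)\ge\psi(R_0)=:c_0>0$, so by monotonicity of $g$ in its second variable and the locally uniform superlinearity, $g(x,M\psi(x))\ge g(x,Mc_0)$ with $g(x,Mc_0)/M\to\infty$ uniformly in $x\in\overline{B_{R_0}}$, which for large enough $M$ absorbs the bounded term $L_1 C + L_1 M C_\psi$. The main obstacle is calibrating the decay rate of $\psi$: it must be slow enough that $\mu M\psi(x)$ beats the $|x|^{1-\sigma}$ tail of the operator applied to $\phi$, forcing $\beta\le\sigma-1$, yet fast enough that $\psi\to 0$ at infinity, forcing $\beta>0$; both are possible precisely because $\sigma>1$. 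Superlinearity is then needed only on the compact set $\overline{B_{R_0}}$, where the linear lower bound $\mu t$ fails to absorb the term $L_1 MC_\psi$ coming from the operator applied to $M\psi$ itself.
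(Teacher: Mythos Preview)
Your argument is correct and follows essentially the same route as the paper: set $\bar u=\phi+M\psi$ with $\psi$ a smooth function behaving like $|x|^{-p}$ at infinity for some $0<p<\sigma-1$ (the paper uses a smoothed $|x|^{-p}$, you use $(1+|x|^2)^{-\beta/2}$), bound the operator via the Lipschitz constant of $F$ and Lemma~4.3, then use the linear lower bound $g\ge\mu t$ in the far region and the superlinearity of $g$ on a compact ball. Your decay estimate $\int|\delta\psi|/|y|^{n+\sigma}\,dy\le C(1+|x|)^{-\beta-\sigma}$ is slightly sharper than the paper's $C|x|^{-\sigma}$, but the weaker bound already suffices since $\beta<\sigma$.
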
 

\begin{proof}
Take $u_0(x)=1/|x|^{p}$ outside $B_1$, and positive, smooth and bounded by $1$ throughout $\R$, where $p>0$ is to be chosen. Then in particular we would have $\int\frac{|\delta u_0(x,y)|}{|y|^{n+\sigma}}dy$ locally bounded. 

Furthermore for large $x$,
\begin{align*}
\int\frac{|\delta u_0(x,y)|}{|y|^{n+\sigma}}dy&=\int_{|y|<|x|/2}\frac{|\delta u_0(x,y)|}{|y|^{n+\sigma}}dy+\int_{|y|>|x|/2}\frac{|\delta u_0(x,y)|}{|y|^{n+\sigma}}dy\\&\le C\frac{1}{|x|^{p+2}}\int_{|y|<|x|/2}\frac{|y|^2}{|y|^{n+\sigma}}dy+\int_{|y|>|x|/2}\frac{4}{|y|^{n+\sigma}}dy\\&\le C\frac{1}{|x|^{p+2}}|x|^{2-\sigma}+C|x|^{-\sigma}\\&\le C/|x|^{\sigma}
\\&\le C/|x|^{\sigma-1}.
\end{align*}

Now define $\bar{u}=\phi+Mu_0$, where $M$ is to be chosen. Then obviously $\bar{u}-\phi\to 0$ at infinity. Also
\begin{align*}
\int\frac{F(\delta\bar{u}(x,y))}{|y|^{n+\sigma}}dy&\le \int\frac{F(\delta\phi(x,y)+M\delta u_0(x,y) )}{|y|^{n+\sigma}}dy\\&\le \int\frac{F(\delta\phi(x,y))}{|y|^{n+\sigma}}dy+L_1\int\frac{|M\delta u_0(x,y)|}{|y|^{n+\sigma}}dy\\&\le L_1\int\frac{\delta\phi(x,y)}{|y|^{n+\sigma}}dy+ML_1\int\frac{|\delta u_0(x,y)|}{|y|^{n+\sigma}}dy,
\end{align*} which decays like $1/|x|^{\sigma-1}$ for large $x$ and remains bounded on compact sets. Therefore if we choose $p<\sigma-1$, then $g(x,\bar{u}-\phi)=g(x,Mu_0)\ge\mu Mu_0(x)=\mu M/|x|^{p}$ dominates it for large $x$.

For small $x$, simply note $\int\frac{F(\delta\bar{u}(x,y))}{|y|^{n+\sigma}}dy\le C+CM,$ this will be dominated by $g$ if we choose very large $M$ and use the superlinearity condition.\end{proof} 

Here is another condition that gives the existence of supersolutions.
\begin{lem}
If $\phi$ is close to a cone at $\infty$ and $F$ is concave, then there is $\bar{u}$ continuous and
$$\begin{cases}\int\frac{F(\delta\bar{u}(x,y))}{|y|^{n+\sigma}}dy\le g(x,\bar{u}(x)-\phi(x))&\text{in $\R$}\\ \bar{u}-\phi\to 0&\text{at $\infty$}.\end{cases}$$
\end{lem}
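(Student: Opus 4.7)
The supersolution has the same shape as in Lemma 4.4, $\bar u=\phi+Mu_0$; the role played by superlinearity of $g$ there is here taken over by the concavity of $F$. Since $F(0)=0$ and $F$ is concave, the tangent line at $0$ lies above the graph, giving the pointwise estimate $F(t)\le F'(0)\,t$ for every $t\in\mathbb{R}$. Integrating this against $|y|^{-n-\sigma}\,dy$ with $t=\delta\bar u(x,y)$,
$$
\int\frac{F(\delta\bar u(x,y))}{|y|^{n+\sigma}}\,dy\le F'(0)\!\int\frac{\delta\phi(x,y)}{|y|^{n+\sigma}}\,dy+MF'(0)\!\int\frac{\delta u_0(x,y)}{|y|^{n+\sigma}}\,dy.
$$
The plan is to choose $u_0$ so that the second integral on the right is nonpositive at every $x$; this kills the $M$-linear error that forced Lemma 4.4 to assume superlinearity of $g$, and the linear lower bound $g(x,\bar u-\phi)\ge\mu M u_0(x)$ will then suffice.

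Concretely, I would take a smooth radial $u_0$ with $0<u_0\le 1$, equal to $1$ on a large ball $B_R$ and to $|x|^{-p}$ outside $B_{R'}$ for some $R'>R$ and a fixed $0<p<\min(\sigma-1,\,n-\sigma)$ (the minimum is positive for $n\ge 2$), with a radially concave profile on the shell $R<|x|<R'$ matched smoothly at the endpoints. Then $\int\delta u_0(x,y)/|y|^{n+\sigma}\,dy\le 0$ at every $x$: on $B_R$ because $u_0$ attains its global maximum there, so $\delta u_0(x,\cdot)\le 0$ pointwise; outside $B_{R'}$ because $|x|^{-p}$ is classically $\sigma/2$-superharmonic whenever $p<n-\sigma$; and on the shell because radial concavity makes $\delta u_0(x,\cdot)\le 0$ for $y$ keeping $x\pm y$ inside the shell, with the remaining $y$ handled via $u_0\le 1$ and a judicious choice of the matching conditions at the endpoints.

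Once this global $\sigma/2$-superharmonicity is in hand, the tangent-line estimate together with Lemma 4.3 gives
$$
\int\frac{F(\delta\bar u(x,y))}{|y|^{n+\sigma}}\,dy\le F'(0)\,C\min\!\Bigl\{1,\frac{1}{|x|^{\sigma-1}}\Bigr\},
$$
a bound independent of $M$. On the other hand $g(x,\bar u-\phi)\ge\mu M u_0(x)$, which is bounded below by $\mu M c_0$ on any bounded set and by a constant multiple of $M|x|^{-p}$ at infinity. Since $p<\sigma-1$, a single universal $M$---depending on $L_1,L_3,L_4,L_5,\mu$ and on the parameters of the $u_0$ construction, but not on $F$ beyond $F'(0)\le L_1$---makes $\mu M u_0(x)$ dominate $F'(0)C\min\{1,|x|^{-(\sigma-1)}\}$ for every $x$. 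The boundary condition $\bar u-\phi=Mu_0\to 0$ at infinity is automatic.

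The main obstacle is verifying $\int\delta u_0/|y|^{n+\sigma}\,dy\le 0$ on the transition shell, where positive contributions from $y$ with $x\pm y$ entering the plateau $\{u_0=1\}$ must be dominated by the negative contributions from the concave profile and the decaying tail. Away from the shell the sign is immediate (maximum principle on $B_R$ and the classical identity for $|x|^{-p}$ outside $B_{R'}$); inside it, a careful tuning of $R$, $R'$, and the matching conditions (zero slope at $|x|=R$, tail slope $-p|x|^{-p-1}$ at $|x|=R'$) is required. The rest of the argument is routine bookkeeping with the constants coming from Lemma 4.3 and the tangent-line inequality.
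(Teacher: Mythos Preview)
Your overall strategy is exactly right and matches the paper: concavity plus $F(0)=0$ gives $F(t)\le F'(0)\,t$, so with $\bar u=\phi+Mu_0$ one only needs a positive, decaying $u_0$ with $\Delta^{\sigma/2}u_0\le 0$ everywhere; then the $M$-term on the right is nonpositive, Lemma~4.3 bounds the $\phi$-term by $C\min\{1,|x|^{1-\sigma}\}$, and $g(x,Mu_0)\ge\mu M u_0$ with $u_0\gtrsim\min\{1,|x|^{-p}\}$, $p<\sigma-1$, closes the argument for large $M$.

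The gap is in your construction of $u_0$. You flag the transition shell as the main obstacle, but even your claim for $|x|>R'$ is not justified as stated: the operator is nonlocal, so $\int\delta u_0(x,y)/|y|^{n+\sigma}\,dy$ for such $x$ still sees the values of $u_0$ inside $B_{R'}$, where $u_0\neq|\cdot|^{-p}$. Since $u_0=1>|z|^{-p}$ on the annulus $1<|z|<R$ while $u_0\le 1<|z|^{-p}$ near the origin, the difference $u_0-|\cdot|^{-p}$ has no sign, and superharmonicity of the pure power does not by itself give $\Delta^{\sigma/2}u_0(x)\le 0$ there. Producing a smooth radial profile that is genuinely $\sigma/2$-superharmonic across a plateau, a shell, and a power tail is not routine bookkeeping; it is a real construction that your proposal leaves open.

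The paper sidesteps all of this with a potential-theoretic definition: take
\[
u_0 \;=\; \bigl(\min\{1,|x|^{-\sigma-\tau}\}\bigr) * \Phi_\sigma,
\]
where $\Phi_\sigma$ is the fundamental solution of $-\Delta^{\sigma/2}$ and $\tau<\min\{\sigma-1,\,n-\sigma\}$. Then $-\Delta^{\sigma/2}u_0=\min\{1,|x|^{-\sigma-\tau}\}\ge 0$, so $\int\delta u_0/|y|^{n+\sigma}\,dy\le 0$ at every point automatically, with no casework. The required lower bound $u_0\ge C\min\{1,|x|^{-\tau}\}$ and the decay $u_0\to 0$ are quoted from Caffarelli--Charro. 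With this $u_0$ your remaining estimates go through unchanged.
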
 

\begin{proof}
We define $u_0$ to be the convolution of $min\{1,|x|^{-\sigma-\tau}\}$ and the fundamental solution to $-\Delta^{\sigma/2}$, where $\tau<\min\{\sigma-1,n-\sigma\}.$  It is shown in Lemma 6.1 in \cite{CCh} that $$u_0(x)\ge C\min\{1,|x|^{-\tau}\}.$$
Also it's clear $\Delta^{\sigma/2}u_0\le 0$ and $u_0$ vanishes at infinity.

Now if we define $\bar{u}=\phi+Mu_0$, then $\bar{u}-\phi\to 0$ at infinity. Moreover, 
\begin{align*}
\int\frac{F(\delta\bar{u}(x,y))}{|y|^{n+\sigma}}dy&=\int\frac{F(\delta\phi(x,y)+M\delta u_0(x,y))}{|y|^{n+\sigma}}dy\\&\le \int\frac{F(M\delta u_0(x,y))}{|y|^{n+\sigma}}dy+L_1\int\frac{\delta\phi(x,y)}{|y|^{n+\sigma}}dy\\&\le MF'(0)\int\frac{\delta u_0(x,y)}{|y|^{n+\sigma}}dy+L_1\int\frac{\delta\phi(x,y)}{|y|^{n+\sigma}}dy\\&\le L_1\int\frac{\delta\phi(x,y)}{|y|^{n+\sigma}}dy\\&\le C\min\{1,|x|^{1-\sigma}\}.
\end{align*}Note that we used both the concavity of $F$ and the convexity of $\phi$ in this estimate.

Meanwhile, $g(x,\bar{u}-\phi)\ge \mu Mu_0(x)=C\mu M\min\{1,|x|^{-\tau}\}$. Thus a large $M$ gives the desired inequality.
\end{proof} 

Once we have appropriate sub- and super-solutions, the existence of solution follows from the standard Perron's method, at least for uniformly elliptic operators. 

\begin{prop}
Suppose $\phi$ is close to a cone at $\infty$, and $0<\lambda<F'<\Lambda<+\infty$. 
Then under either of these assumptions \begin{itemize}
\item{$F$ is concave}
\end{itemize} or \begin{itemize}\item{$g$ grows superlinearly in the second variable,}\end{itemize}
there exists a unique viscosity solution to the Dirichlet problem.
\end{prop}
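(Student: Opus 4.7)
Uniqueness is immediate from Corollary 3.2, so the task reduces to producing a viscosity solution. I would apply the standard Perron construction: set
\[
u(x) := \sup\{ v(x) : v \text{ is a viscosity subsolution of the Dirichlet problem} \}.
\]
The class over which the supremum is taken is nonempty, since $\phi$ belongs to it by Lemma 4.1. Moreover, under the extra hypothesis, either Lemma 4.3 (superlinear $g$) or Lemma 4.4 (concave $F$) yields a classical supersolution $\bar u$ with $\bar u - \phi \to 0$ at infinity, and the comparison principle (Theorem 3.1) applied to any admissible subsolution $v$ and $\bar u$ forces $v \leq \bar u$. Hence $\phi \leq u \leq \bar u$, and in particular $u - \phi \to 0$ at $\infty$.

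The heart of Perron's method has two steps. First, the upper semicontinuous envelope $u^*$ must be shown to be a viscosity subsolution: given a $C^{1,1}$ test function $\eta$ touching $u^*$ from above at some $x_0\in \R$, choose a sequence of admissible subsolutions $v_k$ with $v_k(x_k)\to u^*(x_0)$ at points $x_k\to x_0$, form the associated replacements $\tilde\eta_k$, and pass to the limit in the integral inequality provided by Proposition 2.4. The tail $|y|\ge 1$ is controlled uniformly by $\bar u$ and $\|\phi\|_{\mathcal{L}^1(1/(1+|y|)^{n+\sigma}dy)}$, while equicontinuity on compact sets (Theorem 3.3 applied to each $v_k$) lets one invoke dominated/Fatou-type convergence. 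Second, the lower semicontinuous envelope $u_*$ must be shown to be a supersolution by the standard bump-up argument: if $u_*$ fails the supersolution inequality at $x_0$, one uses the uniform ellipticity $\lambda<F'<\Lambda$ to solve the local Dirichlet problem in a small ball $B_r(x_0)$ with exterior data equal to $u$, obtaining a continuous function $w$ that agrees with $u$ outside $B_r(x_0)$ and strictly exceeds $u(x_0)$. Comparison keeps $w\le\bar u$, so $w$ is an admissible competitor strictly above $u$ at $x_0$, contradicting the definition of $u$. Then Theorem 3.1 yields $u^*\le u_*$, while trivially $u_*\le u^*$, so $u = u^* = u_*$ is continuous and is the desired solution.

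The main technical obstacle is the bump-up step: it requires solvability and continuity up to $\partial B_r(x_0)$ of the local Dirichlet problem for the uniformly elliptic nonlocal operator $\int F(\delta w(x,y))/|y|^{n+\sigma}\,dy = g(x, w-\phi)$ with the specific exterior data given by $u$. This is exactly where the hypothesis $0<\lambda<F'<\Lambda$ is used: standard nonlocal elliptic theory (Caffarelli--Silvestre) furnishes such a local solution, together with continuity at the boundary of the ball, only in the uniformly elliptic regime. The degenerate case cannot be handled directly and instead will be recovered in Theorem 1.3 via an approximation argument, where the Hölder estimate of Theorem 3.3 provides the compactness needed to pass to the limit as the ellipticity constants degenerate.
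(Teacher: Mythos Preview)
Your overall Perron strategy matches the paper's, and your diagnosis that uniform ellipticity enters precisely at the bump-up step is correct. Two points, however, need correction or comparison.

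First, in the subsolution step you invoke Theorem 3.3 to obtain equicontinuity of the competing subsolutions $v_k$. That theorem is stated and proved only for \emph{solutions}; it does not apply to an arbitrary viscosity subsolution, so this appeal is illegitimate. Fortunately the step does not need it: the fact that the upper semicontinuous envelope of a supremum of subsolutions is again a subsolution is standard in the nonlocal viscosity framework and requires no a priori H\"older bound on the competitors. The paper simply defers to ``standard elliptic theory'' here.

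Second, your bump-up differs from the paper's and is heavier, with a gap as written. You propose to solve the Dirichlet problem in $B_r(x_0)$ with exterior datum $u$; but at this stage $u$ is not known to be continuous, so continuity of the resulting $w$ across $\partial B_r$---and hence its admissibility as a competitor in your Perron class---needs additional care. The paper avoids this entirely by working directly with the violating test function: if a $C^{1,1}$ function $\eta$ touches $u$ from below at $x_0$ with $\int F(\delta\tilde\eta(x_0,y))/|y|^{n+\sigma}\,dy > g(x_0,u(x_0)-\phi(x_0)) + \delta$, one adds a higher-order perturbation so that $\eta$ lies strictly above $u$ near $x_0$ (except at $x_0$) while remaining below the classical supersolution $\bar u$. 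Uniform ellipticity makes $x\mapsto \int F(\delta\tilde\eta(x,y))/|y|^{n+\sigma}\,dy$ continuous near $x_0$, so $\tilde\eta$ is a strict subsolution in a whole neighborhood. Replacing $u$ by $\tilde\eta$ there produces an admissible subsolution strictly exceeding $u$, a contradiction---no auxiliary Dirichlet problem is solved. Your route can be repaired, but the paper's direct argument is both shorter and cleaner.
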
 

\begin{proof}Uniqueness follows from Theorem 3.1.

Let $\bar{u}$ denote the supersolution in either case. Define $$u=\sup\{w \text{ subsolution}| \phi\le w\le\bar{u}\}.$$ It is clear that $u$ is well-defined and satisfies the boundary condition. 

Also standard elliptic theory guarantees $u$ is a viscosity subsolution.

Suppose, on the contrary, that $u$ fails to be a supersolution, then one finds a locally $C^{1,1}$ function $\eta$ touching $u$ from below at $x_0$ but $$\int\frac{F(\delta\tilde{\eta}(x_0,y))}{|y|^{n+\sigma}}dy>g(x_0,u_0(x_0)-\phi(x_0))+\delta,$$ where $\tilde{\eta}$ is the function agreeing with $\eta$ in a neighborhood of $x_0$ and agrees with $u$ outside, and $\delta$ is some positive number. Note that by adding a higher order perturbation we might assume $\eta$ is strictly above $u$ in that neighborhood other than at $x_0$. By doing this we can still make sure $\eta$ stays below $\bar{u}$ since $\bar{u}$ is a classical supersolution while $\eta$ is a strict subsolution in that neighborhood.

But being in the uniform elliptic regime the operator is continuous near $x_0$. We can thus replace $u$ with $\tilde{\eta}$ in a small neighborhood and obtain a subsolution greater than $u$, a contradiction.

\end{proof} 

Now we prove the following theorem by the previous result and an approximating procedure.

\begin{thm}Suppose $\phi$ is close to a cone at $\infty$. Then under either of these assumptions \begin{itemize}
\item{$F$ is concave}
\end{itemize} or \begin{itemize}\item{$g$ grows superlinearly in the second variable,}\end{itemize}
there exists a unique viscosity solution to the Dirichlet problem.\end{thm}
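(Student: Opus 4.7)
The plan is to reduce Theorem 4.6 to the uniformly elliptic case handled by Proposition 4.5 through an approximation in $F$, using Theorem 3.3 as the source of compactness. Uniqueness is immediate from Corollary 3.2, so I focus on existence.

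First I would regularize $F$ by setting $F_k(t)=F(t)+t/k$ for each integer $k\ge 1$. Each $F_k$ then satisfies $F_k(0)=0$, $F_k'\in[1/k,\,L_1+1/k]$ (so $F_k$ is uniformly elliptic for fixed $k$) and $\mathrm{Lip}(F_k)\le L_1+1$ uniformly in $k$; moreover $F_k\to F$ locally uniformly on $\mathbb{R}$. If $F$ is concave then so is $F_k$, and superlinearity of $g$ does not involve $F$, so whichever alternative of the theorem holds is inherited by $F_k$. Proposition 4.5 thus produces, for every $k$, a unique viscosity solution $u_k$ to the $F_k$-Dirichlet problem.

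Next I would extract uniform a priori bounds on $u_k$. The key point is that the supersolutions $\bar u=\phi+Mu_0$ from Lemmas 4.3 and 4.4 can be arranged to work for all $F_k$ simultaneously: in the concave case the inequality $F_k(M\delta u_0)\le M F_k'(0)\,\delta u_0$ still yields a non-positive contribution because $\Delta^{\sigma/2}u_0\le 0$, while in the superlinear case the upper bound on the operator depends only on $\mathrm{Lip}(F_k)\le L_1+1$; in both cases one large $M$ fits every $k$. Since $\phi$ remains a classical subsolution for every $F_k$ by (1.8) and its convexity, the comparison principle (Theorem 3.1) yields $\phi\le u_k\le \phi+Mu_0$ uniformly in $k$. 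Therefore $\|u_k-\phi\|_\infty$ is uniformly bounded, and Theorem 3.3 gives a uniform H\"older seminorm $[u_k]_{C^{(2-\sigma)/2}}\le C$, since its constant depends only on the fixed data $L_2,L_3,L_4,L_5,\mu$ and on $\mathrm{Lip}(F_k)\le L_1+1$.

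Arzel\`a--Ascoli plus a diagonal extraction then produces a subsequence converging locally uniformly to some continuous $u$; the pointwise sandwich $0\le u_k-\phi\le Mu_0$ passes to the limit and forces $u-\phi\to 0$ at infinity. The main obstacle is the stability step: showing that $u$ solves the original (limit) equation in the viscosity sense. I would carry this out by the standard perturbation-of-test-function argument. If $\eta\in C^{1,1}$ touches $u$ strictly from above in $B_r(x_0)$, then local uniform convergence of $u_k\to u$ produces touching points $x_k\to x_0$ of a small perturbation of $\eta$ against $u_k$, and Proposition 2.4 provides the classical inequality for $F_k$ at $x_k$. The near-zero piece $|y|\le r/2$ passes to the limit by $C^{1,1}$-regularity of $\eta$ combined with $F_k\to F$ locally uniformly; the far tail $|y|\ge R$ is controlled by dominated convergence using the uniform $L^\infty$ bound on $u_k-\phi$ together with the integrability (1.11) of $\phi$; the intermediate annulus uses locally uniform convergence of $u_k$. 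The right-hand side passes by continuity of $g$, and the analogous argument for touching from below delivers the supersolution inequality, completing the proof.
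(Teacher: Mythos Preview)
Your proof is correct and follows essentially the same strategy as the paper: approximate the degenerate $F$ by uniformly elliptic $F_k$, invoke Proposition 4.5 for each $k$, use the $\epsilon$-independence of the barriers together with Theorem 3.3 for compactness, and pass to the limit via a test-function stability argument. The only difference is cosmetic: the paper regularizes by $F_\epsilon(t)=\int_0^t\max\{\epsilon,F'(s)\}\,ds$ (which keeps $\mathrm{Lip}(F_\epsilon)=L_1$ exactly and still satisfies $|F_\epsilon(t)-F(t)|\le\epsilon|t|$), whereas you add the linear term $t/k$; both yield the same uniform bounds and the same convergence rate, so the remainder of the argument is identical.
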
 

\begin{proof}Again uniqueness is due to the comparison.

For each $\epsilon>0$ we define an approximating nonlinearity $$F_{\epsilon}(t)=\int_0^t\max\{\epsilon,F'(s)\}ds.$$ These are uniformly elliptic operators, and the previous proposition gives a unique solution $u_\epsilon$ to the Dirichlet problem associated with $F_\epsilon$.

Since our sub- and super- solutions are independent of $\epsilon$, $$\phi\le u_\epsilon\le \bar{u}$$ for all $\epsilon$. And in particular $\|u_\epsilon-\phi\|_{\mathcal{L}^{\infty}}\le \|\phi-\bar{u}\|_{\mathcal{L}^{\infty}}$ uniformly in $\epsilon$. Theorem 3.3 then gives the equicontinuity of this family $\{u_\epsilon\}$. Thus we can extract a subsequence $\epsilon\to 0$ so that $u_\epsilon\to u$ locally uniformly in $\R$.

We now verify that $u$ is the desired solution to the degenerate Dirichlet problem.

Suppose, on the contrary, that $u$ fails to be a subsolution. Then we can find 
$\eta$, a smooth function touching $u$ from above at $x_0$ in some open set $U$, but 
$$\int \frac{F(\delta\tilde{\eta}(x_0,y))}{|y|^{n+\sigma}}dy<g(x_0,\tilde{\eta}(x_0)-\phi(x_0))-\gamma=g(x_0,u(x_0)-\phi(x_0))-\gamma,$$ where $\gamma>0$ and 
$$\tilde{\eta}=\begin{cases} \eta &\text{in $U$}\\u&\text{outside $U$}.\end{cases}$$ 

Without loss of generality we assume $x_0=0$.  

Define $\psi=\eta+\delta|x|^4$ for some $\delta>0$. Then $\psi$ touches $u$ from above at $0$ in $U$. 

For each $m\in\mathbb{N}$ let $\epsilon_m$ be small so that $$u+\delta(1/m)^4/16>u_{\epsilon_m}$$ in $B_1$. Then with $\psi\ge u+\delta(1/m)^4/16$ outside $B_{\frac{1}{2m}}$, we can find $b_m$ such that $\psi+b_m$ touches $u_{\epsilon_m}$ from above at some point $x_m\in B_{\frac{1}{2m}}$. In particular $b_m=o(1)$ and $x_m\to 0$ as $m\to\infty$.

To use the equation  we define 
$$\tilde{\psi}=\begin{cases} \psi &\text{in $U$}\\u &\text{outside $U$}.\end{cases}$$and
$$\tilde{\psi}_m=\begin{cases} \psi+b_m &\text{in $U$}\\u_{\epsilon_m} &\text{outside $U$}.\end{cases}$$

Then one has \begin{align*}\int \frac{F_{\epsilon_m}(\delta\tilde{\psi}_m(x_m,y))}{|y|^{n+\sigma}}dy&\ge g(x_m,(u_{\epsilon_m}-\phi)(x_m))\\&=g(x_m,\psi(x_m)+b_m-\phi(x_m))\\&=g(0,\psi(0)-\phi(0))+o(1)\\&=g(0,u(0)-\phi(0))+o(1)\\&>\int \frac{F(\delta\tilde{\eta}(0,y))}{|y|^{n+\sigma}}dy+\gamma+o(1).\end{align*}

On the other hand, by definition of the approximating operators, $$|F_{\epsilon}(t)-F(t)|\le \epsilon|t|.$$As a result,\begin{align*}\int \frac{F_{\epsilon_m}(\delta\tilde{\psi}_m(x_m,y))}{|y|^{n+\sigma}}dy&\le \int \frac{F(\delta\tilde{\psi}_m(x_m,y))}{|y|^{n+\sigma}}dy+\epsilon_m\int\frac{|\delta \tilde{\psi}_m(x_m,y)}{|y|^{n+\sigma}}dy\\&\le \int \frac{F(\delta\tilde{\psi}(0,y))}{|y|^{n+\sigma}}dy+o(1)+\epsilon_m O(1).
\end{align*}The last inequality can be justified by dominated convergence theorem, and noting that all the test functions are uniformly bounded in $\mathcal{L}^1(\frac{1}{(1+|y|)^{n+\sigma}}dy)$, and they all inherit the same $C^{1,1}$ constant from $\eta$. 

Now by sending $m\to\infty$ and $\delta\to 0$, we have 
$\gamma<o(1)$, a contradiction. As a result $u$ is a subsolution. 

Similar argument shows $u$ is also a supersolution.

\end{proof}

\section{Regularity of the solution}
In this section we study the regularity of the solution we obtained in the previous section. In particular we will assume $\phi\le u\le\bar{u}$, where $\bar{u}$ is some proper supersolution. By Theorem 3.3 we know that $u$ is also H\"older continuous. To get higher regularity,  we first identify some conditions under which our solution is classical. Then we show we can bootstrap to smoothness once the operator, the forcing term as well as the boundary datum are all smooth.

To fit into existing theory, we further impose the following uniform ellipticity condition on $F$ throughout this section \begin{equation}
0<\lambda<F'<\Lambda<\infty.
\end{equation} 

The starting point is to find the equation satisfied by $w=u-\phi$.

\begin{lem}
$w$ satisfies, in the viscosity sense, \begin{equation}\begin{cases}\int\frac{F(\delta w(x,y))}{|y|^{n+\sigma}}dy=g(x,w(x))-\int\delta\phi(x,y)\frac{a(x,y)}{|y|^{n+\sigma}}dy &\text{in $\R$}\\w=0&\text{at $\infty$},\end{cases}\end{equation}where $$a(x,y)=\int_0^1F'(\delta u(x,y)-t\delta\phi(x,y))dt.$$
\end{lem}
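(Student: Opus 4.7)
The plan is to deduce equation (5.2) for $w$ from the equation for $u$ by an algebraic decomposition of $F(\delta u)$, and then transfer the viscosity property from $u$ to $w$ by using that $\phi \in C^{1,1}$, so that touching $w$ by a test function is equivalent to touching $u$ by the test function shifted by $\phi$.

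First I will establish the pointwise identity that drives everything. Writing $\delta u = \delta w + \delta \phi$ and applying the Fundamental Theorem of Calculus to $F$ along the segment from $\delta w$ to $\delta u$, I obtain
\begin{equation*}
F(\delta u(x,y)) = F(\delta w(x,y)) + \delta\phi(x,y)\int_0^1 F'(\delta w(x,y) + s\,\delta\phi(x,y))\,ds.
\end{equation*}
The substitution $t = 1-s$ identifies the integral with $a(x,y)$, giving the pointwise identity $F(\delta u) = F(\delta w) + a\,\delta\phi$. The uniform ellipticity assumption (5.1) then yields $\lambda \le a \le \Lambda$, which together with the $C^{1,1}$ estimate on $\phi$ and its $\mathcal{L}^1(\tfrac{1}{(1+|y|)^{n+\sigma}}dy)$ bound makes $\int \frac{a(x,y)\delta\phi(x,y)}{|y|^{n+\sigma}}dy$ an integrable correction term.

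Next I will verify the viscosity subsolution property of $w$ for (5.2). Let $\eta \in C^{1,1}(U)$ touch $w$ from above at $x_0$. Since $\phi \in C^{1,1}$, the function $\eta + \phi$ is $C^{1,1}$ in $U$ and touches $u = w + \phi$ from above at $x_0$. Therefore Proposition 2.4 applies to $u$, yielding that $\int \frac{F(\delta u(x_0,y))}{|y|^{n+\sigma}}dy$ is well-defined classically and satisfies
\begin{equation*}
\int \frac{F(\delta u(x_0,y))}{|y|^{n+\sigma}}dy \ge g(x_0, w(x_0)).
\end{equation*}
Inserting the pointwise identity and rearranging,
\begin{equation*}
\int \frac{F(\delta w(x_0,y))}{|y|^{n+\sigma}}dy \ge g(x_0, w(x_0)) - \int \frac{a(x_0,y)\delta\phi(x_0,y)}{|y|^{n+\sigma}}dy.
\end{equation*}
This is the inequality required, but with the actual $\delta w$ in place of the test function's $\delta \tilde\eta$. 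To upgrade it to the viscosity formulation of (5.2), I note that the extension $\tilde\eta$ (equal to $\eta$ in $U$ and $w$ outside) satisfies $\tilde\eta \ge w$ everywhere with equality at $x_0$; hence $\delta\tilde\eta(x_0,y) \ge \delta w(x_0,y)$ for all $y$. Monotonicity of $F$ from (1.5) then gives $\int\frac{F(\delta\tilde\eta(x_0,y))}{|y|^{n+\sigma}}dy \ge \int\frac{F(\delta w(x_0,y))}{|y|^{n+\sigma}}dy$, which chained with the previous inequality produces the desired viscosity subsolution inequality for (5.2).

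The supersolution case is entirely symmetric: if $\eta$ touches $w$ from below at $x_0$, then $\eta + \phi$ touches $u$ from below, Proposition 2.4's analogue for supersolutions gives $\int\frac{F(\delta u(x_0,y))}{|y|^{n+\sigma}}dy \le g(x_0, w(x_0))$ classically, and the reverse monotonicity chain closes the argument. The boundary condition $w \to 0$ at infinity is inherited directly from $u - \phi \to 0$. The only subtle point — which I expect to be the main obstacle if anything — is ensuring the passage from the classical inequality at $x_0$ involving $\delta w$ to the viscosity inequality involving $\delta\tilde\eta$; this works cleanly here precisely because the nonlinearity $F$ is monotone, so replacing $\delta w$ by the pointwise larger $\delta\tilde\eta$ preserves the direction of the inequality.
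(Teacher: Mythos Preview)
Your proof is correct and follows the same overall strategy as the paper: shift a test function for $w$ by $\phi$ to obtain a test function for $u$, invoke the equation for $u$, and use the Fundamental Theorem of Calculus to split off the $a(x,y)\,\delta\phi$ term. The one genuine difference is the order of operations. The paper applies the viscosity inequality directly to the test function $\tilde\eta+\phi$ and then runs FTC on $F(\delta(\tilde\eta+\phi))$, which produces the coefficient $\int_0^1 F'(\delta(\tilde\eta+t\phi)(x_0,y))\,dt$; identifying this with $a(x_0,y)$ (which is defined through $\delta u$, not $\delta\tilde\eta$) is not literally justified for $y$ with $x_0\pm y\in U$. Your route avoids this: you first invoke Proposition~2.4 to get the classical inequality with $\delta u$ itself, apply FTC to $F(\delta u)=F(\delta w)+a\,\delta\phi$ so that the correct $a$ appears automatically, and only then use the monotonicity of $F$ to pass from $\delta w$ to $\delta\tilde\eta$. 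This ordering is cleaner and closes the small gap without any extra work.
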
 

\begin{proof}$w$ clearly vanishes at infinity.

Let $\eta$ be a smooth function touching $w$ from above at $x_0$ in some open set $U$. 

Define 
$$\tilde{\eta}=\begin{cases}\eta&\text{in $U$}\\w&\text{outside $U$}\end{cases}.$$ Then $\tilde{\eta}+\phi$ is a test function for $u$ touching $u$ from above at $x_0$.

Since $u$ is a viscosity solution one has $$\int\frac{F(\delta(\tilde{\eta}+\phi)(x_0,y))}{|y|^{n+\sigma}}dy\ge g(x_0,(\tilde{\eta}+\phi)(x_0)-\phi(x_0))=g(x_0,w(x_0)).$$

Meawhile, Fundamental Theorem of Calculus gives 
\begin{align*}F(\delta(\tilde{\eta}+\phi)(x_0,y))&=F(\delta\tilde{\eta}(x_0,y))+\int_0^1\frac{d}{dt}F(\delta(\tilde{\eta}+t\phi)(x_0,y))dt\\&=F(\delta\tilde{\eta}(x_0,y))+\int_0^1 F'(\delta(\tilde{\eta}+t\phi)(x_0,y))dt\cdot\delta\phi(x_0,y)\\&=F(\delta\tilde{\eta}(x_0,y))+a(x_0,y)\delta\phi(x_0,y).\end{align*}

Consequently, 
$$\int\frac{F(\delta \tilde{\eta}(x_0,y))}{|y|^{n+\sigma}}dy=g(x_0,w(x_0))-\int\delta\phi(x_0,y)\frac{a(x_0,y)}{|y|^{n+\sigma}}dy$$ and $w$ is a subsolution. Similarly $w$ is also a supersolution.
\end{proof} 

The following theorem identifies some conditions to guarantee that the solution is classical:

\begin{thm}
If \begin{itemize}\item{$F$ is $C^2$ with $0<\lambda <F'<\Lambda$ and $|F''|\le L_6$,}\item{$g$ is locally Lipschitz in the second variable,} and \item{$Lip(D^2(\phi))<L_7$,}
\end{itemize} then the viscosity solution $u$ is in the class  $C^{1+\sigma+\alpha}$ for some universal $0<\alpha<1$, and in particular is classical.
\end{thm}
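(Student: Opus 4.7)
The approach is to work with the translated equation from Lemma 5.1 satisfied by $w = u - \phi$ and upgrade its regularity by a bootstrap built on incremental quotients. Writing the right-hand side as
\begin{equation*}
h(x) := g(x,w(x)) - \int \delta\phi(x,y)\,\frac{a(x,y)}{|y|^{n+\sigma}}\,dy,
\end{equation*}
the function $w$ satisfies the translation-invariant, uniformly elliptic equation $\int F(\delta w(x,y))\,|y|^{-n-\sigma}\,dy = h(x)$ in $\R$, and Theorem 3.3 supplies an initial H\"older exponent $(2-\sigma)/2$ on which to start.

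First I would verify that $h$ inherits whatever H\"older regularity $w$ currently enjoys. The piece $g(x,w(x))$ is handled directly by (1.6) and the local Lipschitz assumption in the second variable. For the integral piece, $|\delta\phi(x,y)|\le L_3|y|^2$ near the origin together with the $\mathcal{L}^1((1+|y|)^{-n-\sigma}\,dy)$ integrability of $\phi$ gives absolute convergence, while the hypothesis $\mathrm{Lip}(D^2\phi)\le L_7$ yields $|\delta\phi(x+k,y)-\delta\phi(x,y)|\le L_7|k||y|^2$ in the near field, and $L_4$ controls the far field. A near-field/far-field splitting then estimates the $x$-modulus of the drift term. The $C^2$ assumption on $F$ with $|F''|\le L_6$ controls the modulus of $a(\cdot,y)$ in terms of those of $\delta w(\cdot,y)$ and $\delta\phi(\cdot,y)$, so the whole right-hand side improves whenever $w$ does.

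The engine of the bootstrap is that incremental quotients of $w$ solve linear nonlocal equations with bounded measurable coefficients. For a unit vector $e$ and small $t$, set $w_t(x) := (w(x+te)-w(x))/|t|^\alpha$. The Fundamental Theorem of Calculus applied to $F$ yields
\begin{equation*}
\int \frac{b_t(x,y)\,\delta w_t(x,y)}{|y|^{n+\sigma}}\,dy \;=\; \frac{h(x+te)-h(x)}{|t|^\alpha},
\end{equation*}
with $\lambda\le b_t\le\Lambda$ measurable. Uniform $L^\infty$ bounds on $w_t$ coming from $[w]_{C^\alpha}$, together with uniform $L^\infty$ bounds on the right-hand side coming from $[h]_{C^\alpha}$, feed into the Caffarelli--Silvestre interior H\"older estimate for linear nonlocal operators with bounded measurable coefficients \cite{CS1}, producing uniform $C^\gamma$ regularity of $w_t$ with some universal $\gamma>0$. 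A Campanato-type interpolation then raises the H\"older exponent of $w$ from $\alpha$ toward $\alpha+\gamma$. Iterating, $w$ crosses every exponent below $\sigma$; at that stage I would switch to the Schauder estimate for uniformly elliptic translation-invariant nonlocal equations with H\"older data (as in \cite{CS2}, \cite{JX}, or \cite{Ser}) to break through $\sigma$ and into differentiable regularity. Once $w\in C^{1}$, differentiating the equation in space produces a linear uniformly elliptic nonlocal equation for $\partial_e w$ whose right-hand side is H\"older by the analysis of $h$, and one more Schauder application delivers $w\in C^{1+\sigma+\alpha}$.

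The step I expect to be the main obstacle is maintaining sharp control of the drift-like term $\int \delta\phi(x,y)\,a(x,y)/|y|^{n+\sigma}\,dy$ across the whole bootstrap, because it couples the fixed $C^{2,1}$ regularity of $\phi$ with the improving regularity of $w$ through $a$. Preserving both $y$-integrability and a sharp $x$-modulus at each stage requires a delicate near-field/far-field decomposition, and it is precisely the Lipschitz bound on $D^2\phi$ that prevents this drift from obstructing the passage from $C^{\sigma-\epsilon}$ to exponents larger than $\sigma$, at which point regularity first exceeds the order of the operator.
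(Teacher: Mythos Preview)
Your proposal follows the paper's approach almost exactly: rewrite the equation for $w=u-\phi$ via Lemma~5.1, verify that the right-hand side inherits the current H\"older exponent of $w$ (using $|F''|\le L_6$, the local Lipschitz assumption on $g$, and $\mathrm{Lip}(D^2\phi)\le L_7$ for the near-field piece of the $\phi$-integral), pass to incremental quotients, apply the Caffarelli--Silvestre H\"older estimate, and iterate. The paper does precisely this, phrasing the equation for the quotient via the extremal operators $M^\pm_0$ rather than the linearized coefficient $b_t$, which is an equivalent formulation.

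The only substantive difference is the endgame. The paper iterates the difference-quotient argument all the way to $w\in C^{1+\alpha}$, observes that the right-hand side is then Lipschitz, and invokes Theorem~7.2 of Kriventsov \cite{Kri} once to jump directly to $C^{1+\sigma+\alpha}$. Your plan instead stops the bootstrap below $\sigma$, appeals to a Schauder estimate to cross $\sigma$, and then differentiates at $C^1$. Two caveats: the Schauder results in \cite{JX} and \cite{Ser} are for linear or concave operators, so for the genuinely nonlinear translation-invariant equation satisfied by $w$ the correct reference is again \cite{Kri}; and differentiating a $\sigma$-order equation with $\sigma>1$ requires more than $C^1$ (the paper only differentiates in Theorem~5.3, after $C^{1+\sigma+\alpha}$ is already in hand). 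Neither issue is fatal---continuing with difference quotients instead of derivatives fixes the second---but the paper's single appeal to \cite{Kri} is the cleaner finish.
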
 

\begin{proof}
Let's first assume $w\in C^{\beta}$ for some $\beta$. For instance, we know this is true for $\beta=(2-\sigma)/2$. Denote the right-hand side of (5.2) by $G$, then for $h\in\mathbb{R}$ and $e\in\mathbb{S}^{n-1}$, $G$ satisfies \begin{equation}|G(x+he)-G(x)|\le C|h|^{\beta}.\end{equation}
To see this, note that 
\begin{align*}|g(x+he,w(x+he))-g(x,w(x))|&\le L_2|h|+C|w(x+he)-w(x)|\\&\le L_2|h|+C|h|^{\beta}\\&\le C|h|^{\beta}.
\end{align*} Here we used the fact that $w$ is uniformly bounded, and hence $g$ being locally Lipchitz is as good as being globally Lipschitz in the second variable.
\begin{align*}|a(x+he,y)-a(x,y)|&=|\int_0^1(F'(\delta u(x+he,y)-t\delta\phi(x+he,y))-F'(\delta u(x,y)-t\delta\phi(x,y)))dt|\\&\le \|F''\|(|\delta u(x+he,y)-\delta u(x,y)|+|\delta\phi(x+he,y)-\delta\phi(x,y)|)\\&\le C|h|^{\beta}.
\end{align*}Here we used the Lipchitz regularity of $\phi$.

Thus 
\begin{align*}
|\int (a(x+he,y)-a(x,y))\frac{\delta\phi(x,y)}{|y|^{n+\sigma}}dy|&\le C|h|^{\beta}\int\frac{\delta\phi(x,y)}{|y|^{n+\sigma}}dy\\&\le C|h|^{\beta}.
\end{align*} This is due to the $C^{1,1}$ and $\mathcal{L}^1(\frac{1}{(1+|y|)^{n+\sigma}}dy)$ bound of $\phi$.

Moreover, \begin{align*}
|\int_{B_1^c}\frac{a(x,y)}{|y|^{n+\sigma}}(\delta\phi(x+he,y)-\delta\phi(x,y))dy|&\le 4\int_{B^c_1}\frac{1}{|y|^{n+\sigma}}dy|h|\\&\le C|h|\\&\le C|h|^{\beta}.\end{align*} Again the Lipschitz continuity of $\phi$ is needed here.

And finally 
\begin{align*}|\int_{B_1} \frac{a(x,y)}{|y|^{n+\sigma}}(\delta\phi(x+he,y)-\delta\phi(x,y))dy|&\le C\int_{B_1}\frac{|y|^2}{|y|^{n+\sigma}}dy\cdot L_7|h|\\&\le C|h|^{\beta}.
\end{align*}

(5.3) follows from these four inequalities.

Once (5.3) is established, we can define the difference quotient $$v(x)=\frac{w(x+he)-w(x)}{|h|^\beta}.$$It satisfies 
$M^+_0v(x)\ge \frac{1}{|h|^\beta}(G(x+he)-G(x))$ and $M^-_0v(x)\le \frac{1}{|h|^\beta}(G(x+he)-G(x))$. Here we are using notations from \cite{CS1}. 

Based on (5.3) the right-hand side of these inequalities are bounded functions, and the H\"older estimate in \cite{CS1} gives $v\in C^{\alpha}$ for some universal $\alpha>0$, and hence $w\in C^{\beta+\alpha}$. 

This argument can be applied finitely many times to show $w\in C^{1+\alpha}$.

But once we know $w\in C^{1+\alpha}$, we know $G$ is actually Lipschitz.  Then Theorem 7.2 in \cite{Kri} gives $w\in C^{1+\sigma+\alpha}$. Note that Theorem 7.2 is formulated for smooth right-hand side, but it is clear from the proof that the estimates only depend on the Lipschitz semi-norm of the right-hand side. 
\end{proof} 

Once the solution is classical, we can bootstrap to smooth solutions. We'd like to point out that this bootstrap argument fails for equations in a bounded domain due to very rough boundary behaviour of nonlocal equations. See \cite{RosSer} and \cite{Y1}. However this is not a problem for us since we are dealing with an entire solution.  Therefore the following theorem actually indicates that the rough boundary behaviour might be  the only obstruction to a bootstrap argument for nonlocal equations.

\begin{thm}
If we further assume $F$, $g$ and $\phi$ are smooth, then $u$ is smooth.
\end{thm}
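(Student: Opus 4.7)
The plan is a bootstrap argument starting from Theorem 5.2, which provides $w := u-\phi \in C^{1+\sigma+\alpha}$ solving the equation from Lemma 5.1,
$$\int \frac{F(\delta w(x,y))}{|y|^{n+\sigma}}\,dy = G(x), \qquad G(x) := g(x,w(x)) - \int \delta\phi(x,y)\frac{a(x,y)}{|y|^{n+\sigma}}\,dy.$$
The strategy is to differentiate this equation in $x$, view each directional derivative of $w$ as solving a linear uniformly elliptic nonlocal equation whose coefficient and right-hand side have matching regularity, and then apply Schauder-type estimates for linear nonlocal operators to upgrade regularity one iteration at a time.

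To carry this out I would fix $e\in\mathbb{S}^{n-1}$ and work with the difference quotients $v_h(x) := h^{-1}(w(x+he)-w(x))$. Arguing as in the proof of Lemma 5.1, $v_h$ solves
$$\int \frac{b_h(x,y)}{|y|^{n+\sigma}}\,\delta v_h(x,y)\,dy = h^{-1}\bigl(G(x+he)-G(x)\bigr),$$
where $b_h(x,y) = \int_0^1 F'\bigl(t\,\delta w(x+he,y)+(1-t)\delta w(x,y)\bigr)\,dt$ satisfies $\lambda \le b_h \le \Lambda$. Passing to the limit $h\to 0$, legitimate because of the already known $C^{1+\sigma+\alpha}$ regularity, yields the linearized equation
$$\int \frac{F'(\delta w(x,y))}{|y|^{n+\sigma}}\,\delta(\partial_e w)(x,y)\,dy = \partial_e G(x).$$
A direct computation using the smoothness of $F$ and $g$ together with the $C^{1,1}$ and weighted $\mathcal{L}^1$ bounds on $\phi$ (splitting the $y$-integral into $|y|<1$ and $|y|>1$ as in Theorems 3.3 and 5.2) shows that both $G$ and the kernel coefficient $F'(\delta w(x,y))$ inherit the current $x$-regularity of $w$ while preserving the uniform ellipticity constants $\lambda,\Lambda$.

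Applying the Schauder-type estimate for linear nonlocal equations from \cite{CS3} or \cite{Kri} to this linearized equation then promotes $\partial_e w$ by roughly $\sigma$ Hölder degrees, so that $w$ moves from $C^{1+\sigma+\alpha}$ to $C^{1+\sigma+\alpha'}$ for some $\alpha'>\alpha$ (modulo integer thresholds). Iterating the procedure, applied to higher-order directional derivatives, strictly improves the regularity exponent at each step; since $F$, $g$, and $\phi$ are smooth and uniform ellipticity persists throughout, no threshold caps the process and we conclude $w\in C^k$ for every $k$, hence $u = w+\phi\in C^\infty$. The main obstacle I expect is technical rather than conceptual: at each iteration one must (i) propagate the difference-quotient argument cleanly to higher-order derivatives and justify exchanging the derivative with the principal-value integral, (ii) sidestep the exceptional integer values of the Hölder exponent where the Schauder estimate degenerates, by absorbing an arbitrarily small loss into the next iteration, and (iii) verify that the far-field contributions involving $\delta\phi$ in $\partial_e^k G$ remain controlled after repeated differentiation, which is ultimately ensured by the smoothness and weighted integrability hypotheses on $\phi$.
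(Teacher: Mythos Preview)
Your proposal is correct and follows essentially the same bootstrap strategy as the paper: linearize the equation by differentiating (the paper does this directly for $u$ rather than via difference quotients for $w$, which is legitimate once the solution is classical), obtain a linear uniformly elliptic nonlocal equation for the derivative with H\"older coefficient and H\"older right-hand side, apply Schauder estimates, and iterate. The only minor discrepancy is bibliographic: the relevant Schauder estimates for linear nonlocal operators with H\"older kernels are in \cite{JX} and \cite{Ser}, not \cite{CS3} (Evans--Krylov) or \cite{Kri} ($C^{1,\alpha}$ theory).
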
 

\begin{proof}
Now we have a classical solution we can differentiate the equation for $u$ in $e$ (actually even the derivatives of $u$ have enough regularity for a $\sigma$-order operator, since they are in $C^{\sigma+\alpha}$) to get 
$$\int\delta u_e(x,y)\frac{F'(\delta u(x,y))}{|y|^{n+\sigma}}dy=\nabla_xg(x,u(x)-\phi(x))\cdot e +\partial_t g(x,u(x)-\phi(x))(u_e(x)-\phi_e(x)).$$ Note that the right-hand side is $C^{\sigma+\alpha}$.
Differentiate once more to get 
\begin{align*}\int\delta u_{ee}(x,y)\frac{F'(\delta u(x,y))}{|y|^{n+\sigma}}dy=&\partial_e(\nabla_xg(x,u(x)-\phi(x))\cdot e +\partial_t g(x,u(x)-\phi(x))(u_e(x)-\phi_e(x)))\\&-\int(\delta u_e(x,y))^2\frac{F''(\delta u(x,y))}{|y|^{n+\sigma}}dy.\end{align*} This is a linear elliptic equation with H\"older coefficient and H\"older right-hand side, thus Schauder theory \cite{JX}\cite{Ser} gives $u\in C^{2+\sigma+\alpha}$.

From here it is clear how this argument can be iterated.
\end{proof}

\section*{Acknowledgement} The author would like to thank his PhD advisor, Luis Caffarelli, for many valuable conversations regarding this project. He is also grateful to his colleagues and friends, especially Dennis Kriventsov, Xavier Ros-Oton and Tianlin Jin  for all the discussions and encouragement. 


\end{document}